\DeclareMathAlphabet{\pazocal}{OMS}{zplm}{m}{n}
\definecolor{olaj}{RGB}{70,88,104}
\definecolor{zold}{RGB}{151,151,49}
\definecolor{zold}{RGB}{70,123,148}
\definecolor{bronz}{RGB}{123,79,76}
\definecolor{tortfeher}{RGB}{229,226,203}
\definecolor{tortsarga}{RGB}{219,183,133}
\newcommand*\bigcdot{\mathpalette\bigcdot@{.5}}
\newcommand*\bigcdot@[2]{\mathbin{\vcenter{\hbox{\scalebox{#2}{$\m@th#1\bullet$}}}}}
\definecolor{mydarkorange}{RGB}{0,0,0}
\definecolor{gold}{rgb}{0,0,0}
\definecolor{grey}{RGB}{0,0,0}
\definecolor{myorange}{RGB}{0,0,0}
\definecolor{mydarkorange}{RGB}{0,0,0}
\definecolor{mylightblue}{RGB}{0,0,0}
\definecolor{myyellow}{RGB}{0,0,0}
\definecolor{purple}{RGB}{0,0,0}
\definecolor{myblue}{RGB}{0,0,0}
\definecolor{mygreen}{RGB}{0,0,0}
\definecolor{brown}{RGB}{153,88,43}
\newtheorem{theorem}{Theorem}[section]
\newtheorem{lemma}[theorem]{Lemma}
\theoremstyle{definition}
\newtheorem{definition}[theorem]{Definition}
\newtheorem{proposition}[theorem]{Proposition}
\newtheorem{corollary}[theorem]{Corollary}
\theoremstyle{remark}
\newtheorem{remark}[theorem]{Remark}
\numberwithin{equation}{section}
\newcommand*\cleartoleftpage{%
  \clearpage
  \ifodd\value{page}\hbox{}\newpage\fi
}
\newcommand{\iksz}           {^{\scaleto{(\mathbf X)}{4pt}}}
\newcommand{\ipsz}           {^{\scaleto{(\mathbf Y)}{4pt}}}
\newcommand{\komp}      {{^\prime}}
\newcommand{\kompM}[1]{^{{\prime^{\mkern-4mu^{_{#1}}}}}}
\newcommand{\nega}      [1] {{#1}\komp}
\newcommand{\negaM}      [2] {{#2}\kompM{#1}}
\newcommand{\negaiksz}      [1] {\negaM{{\scaleto{(\mathbf X)}{3,5pt}}}{#1}}
\newcommand{\negaipsz}      [1] {\negaM{{\scaleto{(\mathbf Y)}{3,5pt}}}{#1}}
\newcommand{\te}{{\mathbin{*\mkern-9mu \circ}}}
\newcommand{\tepont}{\mathbin{\circ\mkern-7mu \cdot\mkern2mu}}
\newcommand{\teiksz}{\mathbin{\cdot\iksz }}
\newcommand{\teipsz}{\mathbin{\cdot\ipsz }}
\newcommand{\ite}[1]{\mathbin{\rightarrow_{#1}}}
\newcommand{\g}                 [2] {{#1} \mathbin{\te} {#2}}
\newcommand{\gteM}       [3] {{#2} \mathbin{\te_{#1}} {#3}}
\newcommand{\gpont}        [2] {{#1} \tepont {#2}}
\newcommand{\giksz}        [2] {{#1} \teiksz {#2}}
\newcommand{\gipsz}        [2] {{#1} \teipsz {#2}}
\newcommand{\res}               [3] {{#2}\mathbin{\ite{#1}}{#3}}
\newcommand{\resiksz}               [2] {{#1}\mathbin{\ite{}\iksz }{#2}}
\newcommand{\resipsz}               [2] {{#1}\mathbin{\ite{}\ipsz }{#2}}
\newcommand{\lex}                                        {\overset{\rightarrow}{\times}}
\newcommand{\Twoheadrightarrow}               {\rightarrow \mkern-12.25mu \rightharpoonup}
\newcommand{\plexII}               {\overset{\Twoheadrightarrow}{\times}}
\newcommand{\PLPII}            [2] {{#1}\plexII{{#2}}}
\begin{document}
%\pagecolor{green}

%\large

%\color{tortfeher}
%color{tortsarga}

\title{Densification in classes of involutive commutative residuated lattices}
%\subtitle{Do you have a subtitle?\\ If so, write it here}

%\titlerunning{}
%\small
%\large

\begin{abstract}

The representation theorem for odd or even involutive FL$_e$-chains by bunches of layer groups, as discussed in \cite{JenRepr2020}, is redefined to demonstrate a more straightforward constructional relationship between odd or even involutive FL$_e$-chains and bunches of layer groups, bypassing the intermediary stage of layer algebras.
By leveraging this redefined theorem, it is demonstrated that both the variety of semilinear odd involutive FL$_e$-algebras and its idempotent symmetric subvariety admits densification.
Ultimately, employing the algebraic techniques introduced in \cite{JS_Hahn}, the proof of the strong standard completeness of Involutive Uninorm Logic with Fixed Point ($\mathbf{IUL}^{fp}$) is established, thus strengthening the main result of \cite{JS_FSSC}.

% by showing that the variety of odd involutive FL$_e$-algebras admits densification, and then applying a subsequent Dedekind-MacNeille completion. For the densification step two different proofs shall be presented. The first proof shows only densifiability, that is, that eliminating a single gap is always possible, and relies on the known fact that densifiability is sufficient for admitting densification, whereas in the second proof the dense chain in which the given chain embeds shall be constructed at once.
\end{abstract}

\author{S\'andor Jenei}

%\authorrunning{Short form of author list} % if too long for running head

\address{Institute of Mathematics and Informatics, Eszterh\'azi K\'aroly Catholic University, Hungary,
and 
Institute of Mathematics and Informatics, University of P\'ecs, Hungary}

\keywords{Involutive residuated lattices, substructural fuzzy logic, standard completeness, emdedding}

\subjclass[2010]{Primary 03B47; Secondary 03G25.}

\date{}
% The correct dates will be entered by the editor

\maketitle

%\Large
%\small
%\footnotesize

\section{Introduction and preliminaries}

Varieties of FL-algebras (also known as pointed residuated lattices) serve as algebraic counterparts of substructural logics \cite{gjko}. This area significantly intersects with the class of mathematical fuzzy logics \cite{MetMon 2007}.
Currently, there exists less comprehensive knowledge regarding substructural logics without weakening compared to those involving weakening. The aim of this paper is to take a small step into exploring this uncharted territory.
We shall prove the strong standard completeness of Involutive Uninorm Logic with Fixed Point ($\mathbf{IUL}^{fp}$).

Standard algebras for mathematical fuzzy logics are the ones from the corresponding variety which universe is the real unit interval $[0,1]$.
A mathematical fuzzy logic $L$ (or the variety which is its equivalent algebraic counterpart) enjoys strong standard completeness if the following conditions are equivalent for each formula $\varphi$ and theory $T$ in $L$: 
(1) $T\vDash_L \varphi$, (2) for each standard L-algebra $\mathbf A$ and each $\mathbf A$-model $e$ of $T$, $e$ is an $\mathbf A$-model of $\varphi$.
%\begin{enumerate}
%\item $T\vDash_L \varphi$,
%\item for each standard L-algebra $\mathbf A$ and each $\mathbf A$-model $e$ of $T$, $e$ is an $\mathbf A$-model of $\varphi$.
%\end{enumerate}
There are two weaker alternatives for defining standard completeness. The same definition but confining to finite theories yields the definition of finite strong standard completeness, whereas by setting $T=\emptyset$ one obtains the definition of (weak) standard completeness.

Densification is a key component to prove standard completeness of mathematical fuzzy logics.
The idea 
of proving strong standard completeness of a mathematical fuzzy logic 
via densification of countable chains followed by a subsequent Dedekind-MacNeille completion has been introduced in \cite{JMstcompl}.
Since then this method has become folklore and has been extensively applied for proving strong standard completeness of a whole lot of substructural fuzzy logics in the literature.
To exhibit one more application, %using Theorem~\ref{BUNCH_X} 
we prove here the strong standard completeness of $\mathbf{IUL}^{fp}$. 
The finite strong standard completeness of this logic has already been settled in \cite{JS_FSSC}. Its proof relied on a representation theorem of those odd involutive FL$_e$-chains which has only finitely many positive idempotent elements \cite{JS_Hahn,Jenei_Hahn_err}, and that representation theorem makes use of finite iteration of the so-called partial sublex product construction.
Such algebras are finitely generated, and a usual way to prove finite strong standard completeness is to embed finitely generated FL$_e$-chains into standard ones. In fact, the finite iteration in the representation theorem guided the definition of a finite sequence of consecutive embeddings in \cite{JS_FSSC}.
The strong standard completeness of the same logic via embedding arbitrary (not only finitely generated) countable chains into standard ones in the present paper is not only a stronger result but its proof is substantially simpler, too. % since Theorem~\ref{BUNCH_X} is not only more general than the representation theorem of \cite{JS_Hahn,Jenei_Hahn_err} but also it does not utilize any iteration technique.

An original decomposition method along with the related (re)construction has been introduced in \cite{JenRepr2020} for the class of odd or even involutive FL$_e$-chains.
This methodological contribution aimed to address the intricate structures within this class of residuated chains, providing a novel framework for analysis and manipulation.
The fundamental concept involved partitioning the algebra utilizing its local unit function $x\mapsto\res{}{x}{x}$ into a direct system comprising potentially simpler algebras. These were indexed by the positive idempotent elements of the original algebra, with transitions within the direct system defined by multiplication with a positive idempotent element. The reconstruction method used, partially akin to P\l{}onka sums.
It is termed layer algebra decomposition.
This concept has been recently implemented to structurally describe various classes of residuated lattices, extending beyond its initial application. These encompass finite commutative, idempotent, and involutive residuated lattices, \cite{JTV2021}, finite involutive po-semilattices \cite{JiSu22}, and locally integral involutive po-monoids and semirings \cite{GFoth}.
In these classes the layer algebras are \lq\lq nice\rq\rq.
In \cite{JenRepr2020}, however, the layer algebras exhibit only a moderate improvement in terms of their \lq\lq niceness\rq\rq\ compared to the original algebra.
Consequently, an additional step was necessary, introducing a phase involving the construction (and subsequent reconstruction) of layer groups from layer algebras.
The layer algebra decomposition, coupled with the transformation of layer algebras into layer groups and the reverse construction, culminated in a rather intricate process in \cite[Theorem 8.1]{JenRepr2020}. Ultimately, this process led to a representation of odd or even involutive FL$_e$-chains by bunches of layer groups.
This representation has been lifted to a categorical equivalence in \cite{JScategorical}, and has proven to be a powerful weaponry to prove amalgamation results in classes of involutive FL$_e$-algebras which are neither integral, nor divisible, nor idempotent \cite{JSamalg}.
In Theorem~\ref{BUNCH_X} below, we establish a more direct constructional relationship between an (odd or even involutive) FL$_e$-chain and its corresponding layer groups, bypassing the intermediary stage of layer algebras.
This formulation not only simplifies the formalism but also enhances its adaptability in tackling subsequent mathematical problems. 
As a direct application, we prove the densification property for two varieties
of odd involutive FL$_e$-algebras, namely for the semilinear one and its idempotent symmetric subvariety.
Ultimately, we prove the strong standard completeness of Involutive Uninorm Logic with Fixed Point ($\mathbf{IUL}^{fp}$) in the algebraic style of \cite{JS_Hahn}, thereby surpassing the main result of \cite{JS_FSSC}.

%\section{Preliminaries}\label{87GJHGsfgw}
\section{Preliminaries}
Algebras will be denoted by bold capital letters, their underlying sets by the same regular letter unless otherwise stated.
Let $\mathbf X=(X, \leq)$ be a 
poset.
For $x\in X$ define the upper neighbor $x_\uparrow$ of $x$ to be the unique cover of $x$ if such exists, and $x$ otherwise.
Define $x_\downarrow$ dually.
A partially ordered algebra will be called {\em discretely ordered} if for any element $x$, $x_\downarrow<x<x_\uparrow$ holds.
An {\em FL$_e$-algebra}\footnote{Other terminologies are pointed commutative residuated lattices or pointed commutative residuated lattice-ordered monoids.} 
is a structure $\mathbf X=( X, \leq, \cdot,\ite{}, t, f )$ such that 
$(X, \leq )$ is a lattice, $( X, \leq,\cdot,t)$ is a commutative residuated monoid, %(the unit element $t$ is also referred to as the {\em truth} constant), 
and $f$ is an arbitrary constant, called the {\em falsum} constant.
If the multiplication operation is clear from the context, we write ${x}{y}$ for ${x}\cdot{y}$, as usual.
{\em Commutative residuated lattices} are the $f$-free reducts of FL$_e$-algebras. 
%Sometimes the lattice operators will be replaced by their induced ordering $\leq$ in the signature, in particular, if an FL$_e$-{\em chain} is considered, that is, when the order is total.
Being residuated means that there exists a binary operation $\ite{}$,
called the residual operation of $\cdot$, such that ${x}{y}\leq z$ if and only if $\res{}{x}{z}\geq y$. This equivalence is called adjointness condition, ($\cdot,\ite{}$) is called an adjoint pair. Equivalently, for any $x,z$, the set $\{v\ | \ {x}{v}\leq z\}$ has its greatest element, and $\res{}{x}{z}$, the residuum of $x$ and $z$, is defined as this element: $\res{}{x}{z}:=\max\{v\ | \ {x}{v}\leq z\}$; this is called the residuation condition. Being residuated implies that $\cdot$ is lattice ordered, that is $\cdot$ distributes over join.
One defines the {\em residual complement operation} by $\nega{x}=\res{}{x}{f}$ and calls an FL$_e$-algebra {\em involutive} if $\nega{(\nega{x})}=x$ holds.
In the involutive case $\res{}{x}{y}=\nega{({x}{\nega{y}})}$ holds. 
Call an element $x\geq t$ {\em positive}. 
An involutive FL$_e$-algebra is called {\em odd} if the residual complement operation leaves the unit element fixed, that is, $\nega{t}=t$, and {\em even} if the following (two) quasi-identities hold: $x<t$ $\Leftrightarrow$ $x\leq f$. 
The former condition is equivalent to $f=t$, while 
the latter quasi-identities are equivalent to assuming that 
$f$ 
is the lower cover of 
$t$ (and $t$ 
is the upper cover of 
$f$)
if chains are considered, that is, when the order is total. 
Let us denote the class of odd and the class of even involutive FL$_e$-chains by
$\mathfrak I^{\mathfrak c}_{\mathfrak 0}$
and
$\mathfrak I^{\mathfrak c}_{\mathfrak 1}$,
respectively,
and their union by 
$\mathfrak I^{\mathfrak c}_{\mathfrak 0\mathfrak 1}$\footnote{Odd and even involutive FL$_e$-chains are also called rank $0$ and rank $1$ involutive FL$_e$-chains, respectively, hence the notation.}.

\medskip
A directed partially ordered set is a partially ordered set such that every pair of elements has an upper bound.
\begin{definition}
Let $\langle \kappa,\leq \rangle$ be a directed partially ordered set.
Let $\{\mathbf A_i\in\mathfrak U : i\in\kappa\}$ be a family of algebras of the same type and $\varsigma_{i\to j}$ be a homomorphism\footnote{Homomorphisms are understood according to the corresponding setting. We shall call them the transitions of the direct system.} for every $i,j\in\kappa$, $i\leq j$ with the following properties:
\begin{enumerate}[(D1)]
\item\label{IDes}
$\varsigma_{i\to i}$ is the identity of $\mathbf A_i$, and
\item\label{Kompooot}
$\varsigma_{i\to k}=\varsigma_{j\to k}\circ \varsigma_{i\to j}$ for all $i\leq j\leq k$.
\end{enumerate}
\noindent
Then $\langle \mathbf A_i,\varsigma_{i\to j} \rangle$ is called a direct system of algebras in $\mathfrak U$ over $\kappa$. 
\end{definition}
\begin{definition}\label{homodirsyst} 
Let $\pazocal A=\langle \mathbf A_i,f_{i\to j} \rangle_{\langle\alpha,\leq_\alpha\rangle}$ 
and 
$\pazocal B=\langle \mathbf B_i,g_{i\to j} \rangle_{\langle\beta,\leq_\beta\rangle}$
be two direct systems from the same class $\mathfrak U$ of algebraic systems. 
By a (direct system) homomorphism $\Phi:\pazocal A\to\pazocal B$ 
we mean
a system of $\mathfrak U$-homomorphisms $\Phi=\{\Phi_i:A_i\to B_{\iota_o(i)} \ | \  i\in\alpha\}$
such that $\iota_o:\alpha\to\beta$ is an $o$-embedding
and
for every $i,j\in\alpha$, $i\leq j$ the diagram in Fig.\,\ref{HomoM}
\begin{figure}[ht]
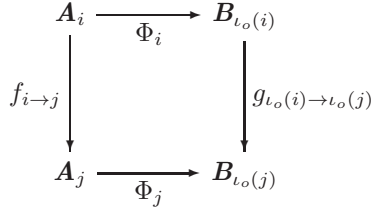

\begin{diagram}
\textbf{\textit{A$_i$}} & \rTo_{\Phi_i} & \textbf{\textit{B$_{\iota_o(i)}$}} \\
\dTo^{f_{i\to j}} & & \dTo_{g_{{\iota_o(i)}\to{\iota_o(j)}}} \\
\textbf{\textit{A$_j$}} & \rTo_{\Phi_j} & \textbf{\textit{B$_{\iota_o(j)}$}} \\
\end{diagram}
\caption{Transitions commute with $\mathfrak U$-homomorphisms}
\label{HomoM}
\end{figure}
commutes.
\end{definition}
As said in the introduction, every odd or even involutive FL$_e$-chain will be represented by a bunch of layer groups in Theorem~\ref{BUNCH_X}.
To this end we need Definition~\ref{DEFbunch}.
A bunch of layer groups is a direct system of abelian $o$-groups enriched with a few additional properties.
In Theorem~\ref{BUNCH_X} we simplify \cite[Theorem 8.1]{JenRepr2020} by exhibiting a more direct link between 
%the class 
$\mathfrak I^\mathfrak c_{\mathfrak0\mathfrak1}$ 
%of odd or even involutive FL$_e$-chains 
and the class $\mathfrak B_\mathfrak G$ of bunches of layer groups. 
Even a bijective correspondence can be established between $\mathfrak I^\mathfrak c_{\mathfrak0\mathfrak1}$ and a slightly modified version of $\mathfrak B_\mathfrak G$. 
%$\mathfrak I^\mathfrak c_{\mathfrak0\mathfrak1}$ and the class $\mathfrak B_\mathfrak G$ of bunches of layer groups.
%
%\smallskip
In a bunch of layer groups 
$\langle \textbf{\textit{G$_u$}},\textbf{\textit{H$_u$}}, \varsigma_{u\to v} \rangle_{\langle \kappa_o, \kappa_J, \kappa_I, \leq_\kappa\rangle}$
there are three pairwise disjoint sets $\kappa_o$, $\kappa_J$, and $\kappa_I$, 
their union $\kappa$ is totally ordered by $\leq_\kappa$ such that $\kappa$ has a least element $t$.
It can be seen whether the corresponding  involutive FL$_e$-chain is odd, even with a non-idempotent falsum constant, or 
even with an idempotent falsum constant (let us call it the type of the algebra), by looking at whether $t$ is in $\kappa_o$, in $\kappa_J$, or in $\kappa_I$, respectively. 
\begin{definition}\label{DEFbunch}
%\cite[Definition~7.1]{JenRepr2020}
A bunch of layer groups
$${\mathcal X}=\langle \bm{G}_u,\bm{H}_u, \varsigma_{u\to v} \rangle_{\langle \kappa_o, \kappa_J, \kappa_I, \leq_\kappa\rangle}$$
is a direct system 
$\langle \bm{G}_u,\varsigma_{u\to v}\rangle_{\langle\kappa, \leq_\kappa\rangle}$ of abelian $o$-groups
over the totally ordered set
$\kappa=
\kappa_o\,\cup\,\kappa_J\,\cup\,\kappa_I$
($\kappa_o$, $\kappa_J$, $\kappa_I$ are pairwise disjoint)
with least element $t$,
$u$ being the unit element of $\bm{G}_u$,
such that
\begin{enumerate}[start=1,label={(G\arabic*)}]
%\item 
%%$\kappa_o\cup\kappa_J\cup\kappa_I$ 
%$\kappa$ 
%has a least element $t$,
\item \label{(G1)} 
$\kappa_o\subseteq\{t\}$,
\item\label{(G3)}
for $v\in\kappa_I$, $\bm{H}_v\leq\bm{G}_v$
\footnote{$\textbf{\textit{H$_u$}}$'s are indexed by $\kappa_I$ only.}
and for
$u<_\kappa v\in\kappa_I$,
$\varsigma_{u\to v}$ maps into $H_v$,
\item\label{(G2)}%\label{DiSCRetE}
for $u\in\kappa_J$, $\bm{G}_u$ is discrete
and for $v>_\kappa u\in\kappa_J$,
$\varsigma_{u\to v}(u)=\varsigma_{u\to v}(u_{\downarrow_u})$.\footnote{${ }_{\downarrow_u}$ denotes the neighborhood operation in $\bm{G}_v$.}
\end{enumerate}
Call the $\bm{G}_u$'s and the $\bm{H}_u$'s the layer groups and layer subgroups of $\mathcal X$, respectively, call $\langle\kappa,\leq_\kappa\rangle$ the {\em skeleton} of $\mathcal X$, call $\langle \kappa_o, \kappa_J, \kappa_I \rangle$ the {\em partition} of the skeleton, and call $\langle \bm{G}_u, \varsigma_{u\to v} \rangle_{\langle \kappa, \leq_\kappa\rangle}$ the direct system of $\mathcal X$.
%We shall call 
%$${\mathcal X_d}=\langle \bm{G}_u,\varsigma_{u\to v} \rangle_{\langle \kappa, \leq_\kappa\rangle}$$
%the direct system of $\mathcal X$.
\end{definition}

A partially ordered algebra with at least two elements is called dense if for any two distinct comparable elements of it there is a third one strictly in between them. If for some $a<b$ there is no $c$ such that $a<c<b$ then we call the pair $a,b$ a gap.
A class of partially ordered algebras $\mathtt C$ admits densification if every algebra in $\mathtt C$ can be embedded into a dense algebra in $\mathtt C$. 
A variety $\mathtt V$ is called semilinear if every subdirectly irreducible algebra in $\mathtt V$ is a chain. Therefore, referring to the subdirect product representation of Birkhoff, for a semilinear variety, it is sufficient to assume densification for its chains: we say that a variety $\mathtt V$ admits densification if every chain in $\mathtt V$ can be embedded into a dense chain in $\mathtt V$. 
We also say that a class $\mathtt C$ of linearly ordered algebras (not a variety) 
admits densification if every chain in $\mathtt C$ can be embedded into a dense chain in $\mathtt C$. 
Semilinearity of $\mathtt V$ is characteristic to the semantic consequence relation of $\mathtt V$ being the same as the semantic consequence relation of its chains, see e.g.\ \cite{HorcikAlgSem}.

As said in the introduction, strong standard completeness of a logic $L$ can be shown by proving that any countable $L$-chain is embeddable into a standard $L$-chain, and that it can be achieved by densification and a subsequent MacNeille completion. 
Involutive Uninorm Logic with Fixed Point ($\mathbf{IUL}^{fp}$) has been introduced in \cite{MetcThes},
we refer the reader to the introduction of \cite{JS_FSSC} for a motivation for this interesting logic.
Knowing that $\rm{IUL}^{fp}$-chains, that is, non-trivial bounded odd involutive FL$_e$-chains constitute an algebraic semantics of $\mathbf{IUL}^{fp}$, 
we shall prove in this paper that any non-trivial countable, bounded odd involutive FL$_e$-chain %(bounded and not bounded alike) 
embeds into an odd involutive FL$_e$-chain over the real unit interval $[0,1]$ such that its top element is mapped into $1$ and its bottom element is mapped into $0$.

%\newpage
\section{A \lq\lq shorter\rq\rq\ bridge between $\mathfrak I^\mathfrak c_{\mathfrak0\mathfrak1}$ and $\mathfrak B_\mathfrak G$}%\label{SEClinkkk}
In the next theorem we reformulate \cite[Theorem~8.1]{JenRepr2020} to better suit to the purpose of the present paper.

%\newpage
\begin{theorem}\label{BUNCH_X}
%\footnotesize
%{\cite[Theorem~8.1]{JenRepr2020}}
\begin{enumerate}[label={(A)}]
\item\label{errefere}
Given an odd or an even involutive FL$_e$-chain $\mathbf X=(X,\leq,\cdot,\ite{},t,f)$ with residual complement operation $\komp$,
\begin{equation}\label{IgyNeznekKi}
\mathcal X_{\mathbf X}=\langle \textbf{\textit{G$_u$}},\textbf{\textit{H$_u$}}, \varsigma_{u\to v} \rangle_{\langle \kappa_o, \kappa_J, \kappa_I,\leq_\kappa\rangle}
\ \  
{\mbox{with}}
\ \
\textbf{\textit{G$_u$}} = (G_u,\leq_u,\cdot_u,\ { }^{-1_u},u)
\ \ \ 
(u\in\kappa)
\end{equation}
is bunch of layer groups, called the {\em bunch of layer groups of $\mathbf X$} or the {\em group representation of $\mathbf X$},
where
\begin{equation}\label{IGYleszSKELETON}
\kappa=\{\res{}{x}{x} : x\in X\}=
\{u\geq t : u \mbox{ is idempotent} \} 
\mbox{ is ordered by $\leq$,}
\end{equation}
\begin{equation}%\label{kappaIJ}
\begin{array}{lll}
\bar\kappa_I
&=&
\{u\in \kappa\setminus\{t\} : \nega{u} \mbox{ is idempotent}\},\\
\bar\kappa_J&= & 
\{u\in \kappa\setminus\{t\} : \nega{u} \mbox{ is not idempotent}\},
\\
\end{array}
\end{equation}
$\kappa_o$, $\kappa_J$, $\kappa_I$ are defined by %the table in Theorem~\ref{BUNCH_X},
\begin{table}[h]
\begin{center}
\begin{tabular}{l|l|l|lll}
$\kappa_o$ \ \ \ \ \ \ \ \ & $\kappa_J$ & $\kappa_I$ & \\
\hline
\{t\} & $\bar\kappa_J$ & $\bar\kappa_I$ & if $\mathbf X$ is odd\\
\hline
$\emptyset$ & $\bar\kappa_J\cup\{t\}$ & $\bar\kappa_I$ & if $\mathbf X$ is even and $f$ is not idempotent \\
\hline
$\emptyset$ & $\bar\kappa_J$ & $\bar\kappa_I\cup\{t\}$ & if $\mathbf X$ is even and $f$ is idempotent\\
\hline
\end{tabular}
%\caption{}
%\label{ThetaPsiOmegaAGAIN}
\end{center}
\end{table}
\\
for $u\in\kappa$,
\begin{equation}\label{XHiGYkESZUL}
\begin{array}{llll}
L_u&=&\{x\in X : \res{}{x}{x}=u\},\\
H_u&=&\{x\in L_u : {x}{\nega{u}}<x\}=\{x\in L_u : {\mbox{$x$ is $u$-invertible}} \},\footnotemark\\
\accentset{\bullet}H_u&=&\{ \accentset{\bullet}x : x\in H_u\}
\mbox{
where $\accentset{\bullet}x={x}{\nega{u}}$,
}
\end{array}
\end{equation}
\footnotetext{Here and in the next row, $u\in\kappa_I$. We say that $x\in L_u$ is $u$-invertible if there is $y\in L_u$ such that $xy=u$.}
\begin{equation}\label{DEFcsopi}
G_u=\left\{
\begin{array}{ll}
L_u & \mbox{if $u\notin\kappa_I$}\\
%L_u\setminus\{\gipsz{x}{\nega{u}} : x\in H_u\} & \mbox{if $u\in\kappa_I$}\\
L_u\setminus\accentset{\bullet}H_u & \mbox{if $u\in\kappa_I$}\\
\end{array}
\right.
,
\end{equation}
%$G_u=L_u$ if $u\notin\kappa_I$, $G_u=L_u\setminus\{\gipsz{x}{\nega{u}} : x\in H_u\}$ if $u\in\kappa_I$,
\begin{equation}\label{}
\leq_u\ =\ \leq\,\cap\ (G_u\times G_u)
\end{equation}
 \begin{equation}\label{IgyTorzulaSzorzat}
x\cdot_u y=
\left\{
\begin{array}{ll}
xy & \mbox{if $u\notin\kappa_I$}\\
\res{}{(\res{}{{x}{y}}{u})}{u} & \mbox{if $u\in\kappa_I$}\\
\end{array}
\right.
,
\end{equation}
for $x\in G_u$,
\begin{equation}\label{EzLeSzainVerZ}
x^{-1_u}=\res{}{x}{u},
\end{equation} 
%where 
%\\
%$L_u=\{x\in X : \tau(x)=u\}$, $\leq_u\,=\,\leq\cap \ (L_u\times L_u)$, $\teu = \te_{|L_u\times L_u}$, $\ite{\teu} = \ite{\te}_{|L_u\times L_u}$, $x^{-1_u}=\res{\teu}{x}{u}$
%\\
%$H_u=\{x\in L_u : \gipsz{x}{\nega{u}}<x\}$,
%$G_u=L_u\setminus\{\gipsz{x}{\nega{u}} : x\in H_u\}$,
%$\preceq_u\,=\,\leq\cap \ (G_u\times G_u)$, $\cdot_u = \te_{|G_u\times G_u}$, 
%\\
%$\ite{\teu} = \ite{\te}_{|L_u\times L_u}$, $x^{-1_u}=\res{\teu}{x}{u}$
%\\
%$$
%\begin{array}{lllll}
%\kappa_o=\{t\}, & \kappa_J=\bar\kappa_J, & \kappa_I=\bar\kappa_I & \mbox{if $\mathbf X$ is odd}\\
%\kappa_o=\emptyset, & \kappa_J=\bar\kappa_J\cup\{t\}, & \kappa_I=\bar\kappa_I & \mbox{if $\mathbf X$ is even and $f$ is not idempotent}\\
%\kappa_o=\emptyset, & \kappa_J=\bar\kappa_J, & \kappa_I=\bar\kappa_I\cup\{t\} & \mbox{if $\mathbf X$ is even and $f$ is idempotent}\\
%\end{array}
%$$
and for $u,v\in\kappa$ such that $u\leq_\kappa v$, $\varsigma_{u\to v} : G_u\to G_v$ is defined by
\begin{equation}\label{MapAzSzorzas}
\varsigma_{u\to v}(x)={v}{x}
.
\end{equation}
\end{enumerate}
%\bigskip
\begin{enumerate}[label={(B)}]
\item%\label{arrafere}
Given a bunch of layer groups
$$
\mbox{
$
\mathcal X=\langle \textbf{\textit{G$_u$}},\textbf{\textit{H$_u$}}, \varsigma_{u\to v} \rangle_{{\langle \kappa_o, \kappa_J, \kappa_I, \leq_\kappa\rangle}}
$
with
$
\textbf{\textit{G$_u$}}=(G_u,\preceq_u,\cdot_u,\ { }^{-1_u},u)
\ \ 
(u\in\kappa)
$,
}
$$
$$
\mathbf X_{\mathcal X}=(X,\leq,\cdot,\ite{},t,f)
$$
is an involutive FL$_e$-chain with residual complement $\komp$,
called the {\em involutive FL$_e$-chain of $\mathcal X$}
with
\begin{equation}\label{EZazX}
X=\displaystyle\dot\bigcup_{u\in \kappa}L_u
,
\end{equation}
\indent where (according to Definition~\ref{DEFbunch})
$$
%\begin{equation}\label{kappaMIBOLall}
\kappa=\kappa_o\cup\kappa_J\cup\kappa_I,
%\end{equation}
$$
\indent
for $u\in\kappa_I$,
%$$
\begin{equation}\label{Hukeszul}
\mbox{
$\accentset{\bullet}H_u=\{\accentset{\bullet}{h} : h\in H_u\}$ %is a copy of $H_u$
}
%\footnotemark
%\footnotetext{\ It is tacitly understood that if $a\in A\subseteq B$ %, and hence $a\in B$, then $\accentset{\bullet}{a}$  is the same for $a\in A$ and for $a\in B$. Hence, for $A\subseteq B$, $\accentset{\bullet}{A} \subseteq \accentset{\bullet}{B}$.}
\end{equation}
%$$
%, each element $h$ of the disjointly adjuncted $H_u$ will be referred to as $\accentset{\bullet}h$ to make it apart from the element $h\in H_u\subseteq G_u$)
\indent
for $u\in\kappa$,
\begin{equation}\label{IkszU}
L_u=\left\{
\begin{array}{ll}
G_u & \mbox{ if $u\not\in\kappa_I$}\\
G_u\,%\dot
\overset{.}{\cup}\, \accentset{\bullet}H_u & \mbox{ if $u\in\kappa_I$}\\
\end{array}
\right. ,
\end{equation}
for $u,v\in\kappa$, $u\leq_\kappa v$,
\begin{equation}\label{zetaRANGe}
\zeta_{u\to v} : L_u \to G_v
\end{equation}
extends $\varsigma_{u\to v}$
by
\begin{equation}\label{DEFzeta}
\zeta_{u\to v}(x)=
\left\{
\begin{array}{ll}
\varsigma_{u\to v}(x) & \mbox{ if $x\in G_u$,}\\
\varsigma_{u\to v}(a) & \mbox{ if $x=\accentset{\bullet}a\in \accentset{\bullet}H_u$, $u\in\kappa_I$,}\\
\end{array}
\right.
\end{equation}
for $u,v\in\kappa$, $x\in L_u$ and $y\in L_v$,
$x<y$ is defined by
\begin{eqnarray}
\label{INNENfrom}
\zeta_{u\to uv}(x)  \prec_{uv} \zeta_{v\to uv}(y),\footnotemark
\mbox{ or }
\\
%\nonumber
\label{sgdJHJHKJKH3} 
\zeta_{u\to uv}(x)  = \zeta_{v\to uv}(y)
\mbox{ and one of the following three conditions holds }
\\
%\label{kisebbAB}
%u<_\kappa v\notin\kappa_I,
%\\
\label{kisebbAB}
u<_\kappa v%\in\kappa_I
, y\in G_v,
\\
\label{kisebbC}
u=v\in\kappa_I, x\in\accentset{\bullet}H_u, y\in G_u,
\\
\label{kisebbD}
\kappa_I\in u>_\kappa v, x\in\accentset{\bullet}H_u,
\end{eqnarray}
\footnotetext{\ Here and also in (\ref{szorzatJOL}) $uv$ stands for $\max_{\leq_\kappa}(u,v)$. This notation does not cause any inconsistency with the notation of Theorem~\ref{BUNCH_X}/\ref{errefere} 
since for any two positive idempotent elements $u,v$ of an odd or even involutive FL$_e$-chain $(X,\leq,\cdot,\ite{},t,f)$ it holds true that $uv=\max_{\leq}(u,v)$ which is further equal to $\max_{\leq_\kappa}(u,v)$ by (\ref{IGYleszSKELETON}).}
for $u,v\in\kappa$, $x\in L_u$ and $y\in L_v$,
\begin{equation}\label{szorzatJOL}
xy
=
\left\{
\begin{array}{ll}
(\zeta_{u\to uv}(x)\cdot_{uv}\zeta_{v\to uv}(y))^\bullet & 
\mbox{ if $u\neq v$, $uv\in\kappa_I$, $x$ or $y$ is in $\accentset{\bullet} H_{uv}$}\\
%(\zeta_{u\to uv}(x)\cdot_{uv}\zeta_{v\to uv}(y))^\bullet & \mbox{ if $\kappa_I\ni uv>_\kappa v$, $x\notin H_u$ or $\kappa_I\ni v>_\kappa u$, $x\notin H_v$}\\
\left(\zeta_{u\to uv}(x)\cdot_{uv}\zeta_{v\to uv}(y)\right)^\bullet & 
\mbox{ 
if $u=v\in\kappa_I$,
$\zeta_{u\to uv}(x)\cdot_{uv}\zeta_{v\to uv}(y)\in H_{uv}$,
$\neg(x,y\in H_{uv})$
}\\
\zeta_{u\to uv}(x)\cdot_{uv}\zeta_{v\to uv}(y) & \mbox{ otherwise}\\
\end{array}
\right. 
\end{equation}
%\begin{equation}\label{szorzatJOL}
%xy
%=
%\left\{
%\begin{array}{ll}
%(\zeta_{u\to uv}(x)\cdot_{uv}\zeta_{v\to uv}(y))^\bullet & 
%\mbox{ if $u\neq v$, $uv\in\kappa_I$, $x$ or $y$ is in $\accentset{\bullet} H_{uv}$}\\
%\left(x\cdot_{uv}y\right)^\bullet & 
%\mbox{ 
%if $u=v\in\kappa_I$,
%$x\cdot_{uv}y\in H_{uv}$,
%$\neg(x,y\in H_{uv})$
%}\\
%\zeta_{u\to uv}(x)\cdot_{uv}\zeta_{v\to uv}(y) & \mbox{ otherwise}\\
%\end{array}
%\right. 
%\end{equation}
for $x,y\in X$,
\begin{equation}\label{IgYaReSi}
\res{}{x}{y}=\nega{({x}{\nega{y}})},
\end{equation}

where
for $x\in X$ the residual complement $\komp$ is defined by
\begin{equation}\label{SplitNega}
\nega{x}
=
\left\{
\begin{array}{ll}
\left(\zeta_{u\to u}(x)^{-1_u}\right)^\bullet	& \mbox{ if $u\in\kappa_I$ and $x\in H_u$}\\
{\zeta_{u\to u}(x)^{-1_u}}_{\downarrow_{G_u}} & \mbox{ if $u\in\kappa_J$ and $x\in G_u$}\\
%\gamma_u(x)^{-1_u}		& \mbox{ if $u\in\kappa_o$ or ($u\in\kappa_I$ and $x\in L_u\setminus H_u%(G_u\setminus H_u)\cup \accentset{\bullet}H_u 
%$)}\\
\zeta_{u\to u}(x)^{-1_u}		& \mbox{ otherwise}\\
\end{array}
\right.
.
\end{equation}
\begin{equation}\label{tLESZez}
\mbox{
$t$ is the least element of $\kappa$,
}
\end{equation}
%$$
\begin{equation}\label{tLESZaz}
\mbox{
$f$ is the residual complement of $t$.
}
\end{equation}
%\color{red} kell ez?
%and is given by
%$$
%\begin{array}{lll}
%\nega{t}&=&\left\{
%\begin{array}{ll}
%\left(t^{-1}\right)^\bullet	& \mbox{ if $u\in\kappa_I$}\\
%t^{-1}		& \mbox{ if $u\in\kappa_o$}\\
%{t^{-1}}_\downarrow		& \mbox{ if $u\in\kappa_J$}\\
%\end{array}
%\right. .
%\end{array}
%$$
In addition, 
$\mathbf X_{\mathcal X}$ is odd if $t\in\kappa_o$, 
even with a non-idempotent falsum if $t\in\kappa_J$, and 
even with an idempotent falsum if $t\in\kappa_I$.

\end{enumerate}

\begin{enumerate}[start=1,label={(C)}]
\item%\label{INVeRSeS} 
%Items~\ref{errefere} and \ref{arrafere} describe a one-to-one correspondence between the class containing all odd and all even involutive FL$_e$-chains and the class of bunches of layer groups:
Given a bunch of layer groups $\mathcal X$ it holds true that 
$\mathcal X_{({\mathbf X}_\mathcal X)}=\mathcal X$, and
%:
given an odd or even involutive FL$_e$-chain $\mathbf X$ it holds true that $\mathbf X_{(\mathcal X_\mathbf X)}\simeq\mathbf X$.
\footnote{\label{ModifiCaTO}
If Definition~\ref{DEFbunch} is slightly modified in such a way that the
$\accentset{\bullet}{\textbf{\textit{H$_u$}}}$'s
are \lq\lq stored\rq\rq\ in the definition of a bunch
(like ${\mathcal X}=\langle \textbf{\textit{G$_u$}},\textbf{\textit{H$_u$}},\accentset{\bullet}{\textbf{\textit{H$_u$}}}, \varsigma_{u\to v} \rangle_{\langle \kappa_o, \kappa_J, \kappa_I, \leq_\kappa\rangle}$),
and 
instead of taking a copy $\accentset{\bullet}{H}_u$ of $H_u$ in (\ref{Hukeszul}), that stored copy is used in the construction of Theorem~\ref{BUNCH_X}/\ref{errefere},
then also
$\mathbf X_{(\mathcal X_\mathbf X)}=\mathbf X$
holds.
Then, Theorem~\ref{BUNCH_X} describes a bijection, in a constructive manner, between the classes $\mathfrak I^{\mathfrak c}_{\mathfrak 0\mathfrak1}$ and $\mathfrak B_{\mathfrak G}$.
For our purposes in the present paper stating only the isomorphism will be sufficient.}
\qed
\end{enumerate}
\end{theorem}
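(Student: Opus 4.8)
The plan is to derive the whole of Theorem~\ref{BUNCH_X} from the two-stage correspondence of \cite[Theorem~8.1]{JenRepr2020} rather than to re-prove everything from scratch. Since the present statement is advertised as a reformulation that merely \emph{bypasses the intermediary stage of layer algebras}, the essential task is to show that the explicit formulas in parts (A) and (B) compute exactly the composite of the two stages used there---decomposition of $\mathbf X$ into layer algebras, followed by extraction of layer groups---together with the inverse composite. Granting such an identification, validity in (A), validity in (B), and the round-trip statements in (C) are all inherited from \cite[Theorem~8.1]{JenRepr2020}.

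For part (A) I would first record that the skeleton $\kappa$ in (\ref{IGYleszSKELETON}) is precisely the index set of the layer-algebra decomposition of \cite{JenRepr2020}: the positive idempotents coincide with the image of the local-unit map $x\mapsto\res{}{x}{x}$, they are totally ordered by $\leq$ with least element $t$, and for positive idempotents $uv=\max_{\leq}(u,v)$, so the transition direction matches. The layers $L_u$ of (\ref{XHiGYkESZUL}) are the universes of the layer algebras $\mathbf A_u$. The core step is the layer-by-layer identification, split according to whether $\nega{u}$ is idempotent. When $u\notin\kappa_I$ the layer algebra is already an abelian $o$-group, so $G_u=L_u$ with the inherited product and the inverse $\res{}{x}{u}$, matching (\ref{DEFcsopi})--(\ref{EzLeSzainVerZ}); when $u\in\kappa_I$ the layer algebra is not a group, and the group of \cite{JenRepr2020} is recovered by discarding the dotted copy $\accentset{\bullet}H_u$ of the invertible part and twisting the product---precisely $L_u\setminus\accentset{\bullet}H_u$ together with $x\cdot_u y=\res{}{(\res{}{{x}{y}}{u})}{u}$. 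After checking that the transitions $\varsigma_{u\to v}(x)=vx$ of (\ref{MapAzSzorzas}) agree with those of the direct system there, the abelian-$o$-group property of each $\bm{G}_u$, axioms (D1)--(D2), and (G1)--(G3) all hold because they hold for the bunch produced by \cite[Theorem~8.1]{JenRepr2020}.

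For part (B) I would run the identification in reverse: formulas (\ref{EZazX})--(\ref{tLESZaz}) must reproduce, first, the inverse of the group-extraction step---re-adjoining a fresh dotted copy $\accentset{\bullet}H_u$ in (\ref{Hukeszul})--(\ref{IkszU}) to rebuild each layer algebra from $\bm{G}_u$ and $\bm{H}_u$---and then the inverse of the decomposition step, namely the P\l{}onka-type gluing of the layer algebras into a single chain. Here the global order (\ref{INNENfrom})--(\ref{kisebbD}), the global product (\ref{szorzatJOL}) with its $(\cdot)^\bullet$ corrections, and the split residual complement (\ref{SplitNega}) are to be matched against the reconstruction of \cite{JenRepr2020}; once matched, $\mathbf X_{\mathcal X}$ is an involutive FL$_e$-chain of the type dictated by whether $t$ lies in $\kappa_o$, $\kappa_J$, or $\kappa_I$, again by \cite[Theorem~8.1]{JenRepr2020}. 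Part (C) is then immediate from these two identifications: $\mathcal X\mapsto\mathbf X_{\mathcal X}$ is exhibited as the inverse of $\mathbf X\mapsto\mathcal X_{\mathbf X}$, so $\mathcal X_{(\mathbf X_{\mathcal X})}=\mathcal X$ and $\mathbf X_{(\mathcal X_{\mathbf X})}\simeq\mathbf X$ are the round-trip guarantees of \cite[Theorem~8.1]{JenRepr2020} transported through the identifications; the isomorphism (rather than equality) in the second identity, and its upgrade to equality under the stored-copy convention, is exactly the content of footnote~\ref{ModifiCaTO}, since the dotted copy in (\ref{Hukeszul}) is taken afresh.

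The main obstacle is the $u\in\kappa_I$ bookkeeping. One must verify carefully that deleting $\accentset{\bullet}H_u$ and replacing the inherited product by $\res{}{(\res{}{{x}{y}}{u})}{u}$ turns the non-group layer algebra at $u$ into precisely the abelian $o$-group of \cite{JenRepr2020}, and, symmetrically, that re-adjoining a fresh dotted copy and applying the $(\cdot)^\bullet$ corrections in (\ref{szorzatJOL}) together with the first branch of (\ref{SplitNega}) inverts this faithfully. The most error-prone piece is checking the tie-breaking clauses (\ref{kisebbAB})--(\ref{kisebbD}), which decide the order between elements of different layers exactly when the transitions collapse them to a common image; these must be reconciled with the order reconstructed in \cite{JenRepr2020} to guarantee that $\leq$ is total and that both the product and the complement are order-compatible.
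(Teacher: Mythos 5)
Your proposal is correct and takes essentially the same route as the paper: the paper's proof likewise reduces everything to \cite[Theorem~8.1]{JenRepr2020}, observing that only the order, the product, and the residual complement are stated differently, and then verifies by direct case analysis (using $\zeta_{u\to v}=\varsigma_{u\to v}\circ\gamma_u$ and a simplified form of the maps $\rho_v$) that (\ref{INNENfrom})--(\ref{kisebbD}), (\ref{szorzatJOL}), and (\ref{SplitNega}) are equivalent to the earlier formulations---precisely the matching steps, including the $\kappa_I$ bookkeeping with the dotted copies and the tie-breaking clauses, that you single out as critical. The only divergence is one of emphasis: part (A) is unchanged from \cite[Theorem~8.1]{JenRepr2020} and needs no re-derivation through the layer algebras, so the entire burden falls on the three formula equivalences in part (B), with part (C) then inherited verbatim, exactly as you anticipate.
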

\begin{proof}
There are three differences compared to \cite[Theorem~8.1]{JenRepr2020}. 
\begin{enumerate}[(I)]
\item\label{Itodo}
%Here the order of the algebra is given directly from the orders of its layer groups in (\ref{INNENfrom})--(\ref{kisebbD}), whereas i
Concerning (\ref{INNENfrom})--(\ref{kisebbD}), in \cite[Theorem~8.1]{JenRepr2020} the order $\leq$ was given as follows:
for $u,v\in\kappa$, $x\in L_u$ and $y\in L_v$,
\begin{equation}\label{RendeZesINNOVATIVAN}
\mbox{$x<y$ iff $\rho_{uv}(x)<_{uv}\rho_{uv}(y)$ or 
($\rho_{uv}(x)=\rho_{uv}(y)$ and $u<_\kappa v$),}\\
\end{equation}
%\begin{equation*}
%\color{blue}
%nemjo
%\mbox{$x<y$
%iff 
%$\varsigma_{u\to uv}(\gamma_u(x))<_{uv}\varsigma_{v\to uv}(\gamma_v(y))$ or 
%$\varsigma_{u\to uv}(\gamma_u(x))=\varsigma_{v\to uv}(\gamma_v(y))$
% and $u<_\kappa v$,}
%\end{equation*}
%\begin{equation}
%x\leq y \mbox{ iff } 
%\mbox{$\rho_{u\vee v}(x)\leq_{u\vee v}\rho_{u\vee v}(y)$ except if $u>_\kappa v$ and $\rho_{u\vee v}(x)=\rho_{u\vee v}(y)$},\\
%\end{equation}
where
for $v\in\kappa$, $\rho_v : X\to X$ is defined by
\begin{equation*}\label{P52}
\begin{array}{lll}
\rho_v(x)&=&
\left\{
\begin{array}{ll}
\varsigma_{u\to v}(x) & \mbox{ if $x\in G_u$ and $u<_\kappa v$},\\
x & \mbox{ if $x\in G_u$ and $u\geq_\kappa v$},\\
\end{array}
\right. 
\\
\rho_v(\accentset{\bullet}x)&=&
\left\{
\begin{array}{ll}
\varsigma_{u\to v}(x) & \mbox{ if $\accentset{\bullet}x\in \accentset{\bullet}H_u$ and $\kappa_I\ni u<_\kappa v$},\\
\accentset{\bullet}x & \mbox{ if $\accentset{\bullet}x\in \accentset{\bullet}H_u$ and $\kappa_I\ni u\geq_\kappa v$},\\
\end{array}
\right. 
\end{array}
\end{equation*}
and
the ordering $<_u$ of $L_u$ is given by
\begin{equation}\label{KibovitettRendezesITTIS}
\begin{array}{ll}
\mbox{ $\leq_u\,=\,\preceq_u$ if $u\notin\kappa_I$, whereas 
if $u\in\kappa_I$ then 
$\leq_u$ extends $\preceq_u$ %to $L_u$ 
by letting 
}\\
\mbox{
$\accentset{\bullet} a<_u\accentset{\bullet} b$ and $x<_u\accentset{\bullet} a<_uy$
for $a,b\in H_u$, $x,y\in G_u$ with $a\prec_u b$, $x\prec_u a\preceq_u y$.
}
\end{array}
\end{equation}

\item\label{IItodo}
Concerning (\ref{szorzatJOL}), 
in \cite[Theorem~8.1]{JenRepr2020} $\cdot$ was given by
\begin{equation}\label{EgySzeruTe2}
{x}{y}={\rho_{uv}(x)}\mathbin{\bigcdot_{uv}}{\rho_{uv}(y)},
\end{equation}
where
for $u\in\kappa$, $\gamma_u : L_u \to G_u$
\begin{equation}\label{DEFgamma}
\gamma_u(x)=
\left\{
\begin{array}{ll}
x & \mbox{ if $x\in G_u$}\\
a & \mbox{ if $x=\accentset{\bullet}a\in \accentset{\bullet}H_u$, $u\in\kappa_I$}\\
\end{array}
\right.
,
\end{equation}
and for $x,y\in L_u$,
\begin{equation}\label{uPRODigy0}
{x}\mathbin{\bigcdot_u}{y}=\left\{
\begin{array}{ll}
{\left({\gamma_u(x)}\cdot_u{\gamma_u(y)}\right)}^\bullet	& \mbox{ if $u\in\kappa_I$, ${\gamma_u(x)}\cdot_u{\gamma_u(y)}\in H_u$ and $\neg(x,y\in H_u)$}\\
{\gamma_u(x)}\cdot_u{\gamma_u(y)}		& \mbox{ if $u\in\kappa_I$, ${\gamma_u(x)}\cdot_u{\gamma_u(y)}\notin H_u$ or $x,y\in H_u$}\\
x\cdot_u y& \mbox{ if $u\notin\kappa_I$}\\
\end{array}
\right. 
.
\end{equation}

\item\label{IIItodo}
Concerning (\ref{SplitNega}), in \cite[Theorem~8.1]{JenRepr2020}
$\komp$ was defined by 
\begin{equation}\label{regiKOMP}
\nega{x}
=
\left\{
\begin{array}{ll}
a^{-1_u}		& \mbox{ if $u\in\kappa_I$ and $x=\accentset{\bullet}a\in \accentset{\bullet}H_u$}\\
\left(x^{-1_u}\right)^\bullet	& \mbox{ if $u\in\kappa_I$ and $x\in H_u$}\\
x^{-1_u}	& \mbox{ if $u\in\kappa_I$ and $x\in G_u\setminus H_u$}\\
{x^{-1_u}}_{\downarrow_{G_u}} & \mbox{ if $u\in\kappa_J$ and $x\in G_u$}\\
x^{-1_u}		& \mbox{ if $u\in\kappa_o$ and $x\in G_u$}\\
\end{array}
\right. 
.
\end{equation} 
\end{enumerate}

\ref{Itodo}
First notice that for $u\in\kappa_I$ the definition of the ordering in (\ref{KibovitettRendezesITTIS}) can equivalently be given by any of the following ones
\begin{equation}\label{KATEGOR_KibovitettRendezesITTIS55}
\begin{array}{ll}
\mbox{ 
$x\leq_u y$
iff
$\gamma_u(x)\prec_u \gamma_u(y)$
or 
$\left(
\mbox{
$\gamma_u(x)=\gamma_u(y)$
and 
$(x\in\accentset{\bullet}H_u$ or $y\in G_u)$
}
\right)
$
}
\end{array}
\end{equation}
\begin{equation}\label{KATEGOR_KibovitettRendezesITTIS66}
\begin{array}{ll}
\mbox{ 
$x<_u y$
iff 
$\gamma_u(x)\prec_u \gamma_u(y)$
or 
$\left(
\mbox{
$\gamma_u(x)=\gamma_u(y)$, 
$x\in\accentset{\bullet}H_u$,
$y\in G_u$
}
\right)
$
}
\end{array}
\end{equation}
since for $u\in\kappa_I$, the meaning of the definition of the ordering in (\ref{KibovitettRendezesITTIS}) is that for any element $a$ in a subgroup, its dotted copy $\accentset{\bullet}a$ is inserted just below $a$, and any of (\ref{KATEGOR_KibovitettRendezesITTIS55}) and (\ref{KATEGOR_KibovitettRendezesITTIS66}) expresses the same.
Also notice that for $v\in\kappa$, $\rho_v : X\to X$ can be written in the following simpler form:
\begin{equation}\label{P5}
\rho_v(x)
=
\left\{
\begin{array}{ll}
x & \mbox{ if $x\in L_u$ and $u\geq_\kappa v$}\\
\varsigma_{u\to v}(\gamma_u(x)) & \mbox{ if $x\in L_u$ and $u<_\kappa v$}\\
\end{array}
\right. 
,
\end{equation}
and that for $u,v\in\kappa$, $u\leq_\kappa v$, 
\begin{equation}\label{KompoZicio}
\zeta_{u\to v}=\varsigma_{u\to v}\circ\gamma_u
\end{equation}
Therefore, by (\ref{P5}), (\ref{RendeZesINNOVATIVAN}) is equivalent to
$$
\left\{
\begin{array}{ll}
\zeta_{u\to v}(x)  \leq_v y
&
\mbox{if $u<_\kappa v$} \\
x <_u y
&
\mbox{if $u=v$} \\
x <_u \zeta_{v\to u}(y)
&
\mbox{if $u>_\kappa v$} \\
\end{array}
\right.
$$
and by (\ref{KATEGOR_KibovitettRendezesITTIS55}), (\ref{KibovitettRendezesITTIS}),
and (\ref{KATEGOR_KibovitettRendezesITTIS66}), respectively,
it is further equivalent to
$$
\footnotesize\left\{
\begin{array}{ll}
\zeta_{u\to v}(x)  \preceq_v y
&
\mbox{if $u<_\kappa v\notin\kappa_I$} \\
\gamma_v(\zeta_{u\to v}(x))\prec_v \gamma_v(y)
\mbox{ or } 
\left(
\gamma_v(\zeta_{u\to v}(x))=\gamma_v(y)
\mbox{ and } 
\left(
\zeta_{u\to v}(x)\in\accentset{\bullet}H_v
\mbox{ or } 
y\in G_v
\right)
\right)
&
\mbox{if $u<_\kappa v\in\kappa_I$} \\
x\prec_u y
&
\mbox{if $u=v\notin\kappa_I$} \\
\gamma_u(x)\prec_u \gamma_u(y)
\mbox{ or }
\left(
\gamma_u(x)=\gamma_u(y), 
x\in\accentset{\bullet}H_u,
y\in G_u
\right)
&
\mbox{if $u=v\in\kappa_I$} \\
x \prec_u \zeta_{v\to u}(y)
&
\mbox{if $\kappa_I\not\in u>_\kappa v$} \\
\gamma_u(x)\prec_u \gamma_u(\zeta_{v\to u}(y))
\mbox{ or }
\left(
\gamma_u(x)=\gamma_u(\zeta_{v\to u}(y)), 
x\in\accentset{\bullet}H_u,
\zeta_{v\to u}(y)\in G_u
\right)
&
\mbox{if $\kappa_I\in u>_\kappa v$} \\
\end{array}
\right.
.
$$
Now, $\zeta_{v\to u}(y)\in G_u$ always holds in the last row, see (\ref{DEFzeta}), 
and
$\zeta_{u\to v}(x)\in\accentset{\bullet}H_v$ cannot hold in the second row
since
$G_v$ and $\accentset{\bullet}H_v$ are disjoint by (\ref{IkszU}).
In addition, $\gamma_v\circ\zeta_{u\to v}=\zeta_{u\to v}$ holds by (\ref{DEFzeta}).
Because of these, the condition above is equivalent to
$$
\footnotesize
%\tiny
\left\{
\begin{array}{ll}
\zeta_{u\to v}(x)  \preceq_v y
&
\mbox{if $u<_\kappa v\notin\kappa_I$} \\
\zeta_{u\to v}(x)\prec_v \gamma_v(y)
\mbox{ or } 
\left(
\zeta_{u\to v}(x)=\gamma_v(y)
\mbox{ and } 
y\in G_v
\right)
&
\mbox{if $u<_\kappa v\in\kappa_I$} \\
x\prec_u y
&
\mbox{if $u=v\notin\kappa_I$} \\
\gamma_u(x)\prec_u \gamma_u(y)
\mbox{ or }
\left(
\gamma_u(x)=\gamma_u(y), 
x\in\accentset{\bullet}H_u,
y\in G_u
\right)
&
\mbox{if $u=v\in\kappa_I$} \\
x \prec_u \zeta_{v\to u}(y)
&
\mbox{if $\kappa_I\not\in u>_\kappa v$} \\
\gamma_u(x)\prec_u \zeta_{v\to u}(y)
\mbox{ or }
\left(
\gamma_u(x)=\zeta_{v\to u}(y), 
x\in\accentset{\bullet}H_u
\right)
&
\mbox{if $\kappa_I\in u>_\kappa v$} \\
\end{array}
\right.
.
$$
Keeping in mind that by (\ref{IkszU}), $L_v=G_v$ if $v\notin\kappa_I$, and
referring to (\ref{DEFgamma}), \ref{(G1)}, and (\ref{DEFzeta}), finally we obtain that the above condition is equivalent to (\ref{INNENfrom})--(\ref{kisebbD}).

\bigskip
\ref{IItodo}
By (\ref{DEFgamma}), 
$\mathbin{\bigcdot_u}$ in (\ref{uPRODigy0}) can be written as
\begin{equation}\label{uPRODigy}
{x}\mathbin{\bigcdot_u}{y}=
\left\{
\begin{array}{ll}
\left({\gamma_u(x)}\cdot_u{\gamma_u(y)}\right)^\bullet
& \mbox{ if 
$u\in\kappa_I$, 
${\gamma_u(x)}\cdot_u{\gamma_u(y)}\in H_u$,
$\neg(x,y\in H_u)$
}\\
{\gamma_u(x)}\cdot_u{\gamma_u(y)}		& \mbox{ otherwise}\\
\end{array}
\right.
.
\end{equation}
By (\ref{DEFzeta}) and (\ref{DEFgamma}),
for $u,v\in\kappa$, $u\leq_\kappa v$,
\begin{equation}\label{zetaMI}
\zeta_{u\to v}
=
\varsigma_{u\to v}\circ\gamma_u,
\end{equation}
and hence by \ref{(G3)},
\begin{equation}\label{zetaMAPSto}
\mbox{
for $u<_\kappa v\in\kappa_I$, $\zeta_{u\to v}$ maps into $H_v$.
}
\end{equation}
If 
%for $u,v\in\kappa$, $x\in L_u$ and $y\in L_v$,
$u\geq_\kappa v$
then
{\footnotesize
\begin{eqnarray*} 
xy
&\overset{(\ref{EgySzeruTe2})}{=}&
{\rho_{uv}(x)}\mathbin{\bigcdot_{uv}}{\rho_{uv}(y)}
\\
&\overset{(\ref{P5}), (\ref{zetaMI})}{=}&
\left\{
\begin{array}{ll}
\gteM{u}{x}{\zeta_{v\to u}(y)} & \mbox{ if $uv=u>_\kappa v$}\\
\gteM{u}{x}{y} & \mbox{ if $uv=u= v$}\\
\end{array}
\right. 
\\
&\overset{(\ref{IkszU})}{=}&
\left\{
\begin{array}{ll}
\gteM{u}{x}{\zeta_{v\to u}(y)} & \mbox{ if $\kappa_I\not\ni uv=u>_\kappa v$}\\
\gteM{u}{x}{\zeta_{v\to u}(y)} & \mbox{ if $\kappa_I\ni uv=u>_\kappa v$, $x\in H_u$}\\
\gteM{u}{x}{\zeta_{v\to u}(y)} & \mbox{ if $\kappa_I\ni uv=u>_\kappa v$, $x\in G_u\setminus H_u$}\\
\gteM{u}{x}{\zeta_{v\to u}(y)} & \mbox{ if $\kappa_I\ni uv=u>_\kappa v$, $x\in \accentset{\bullet}H_u$}\\
\gteM{u}{x}{y} & \mbox{ if $u= v$}\\
\end{array}
\right. 
\\
&\overset{(\ref{zetaMAPSto}),(\ref{DEFgamma}),(\ref{uPRODigy})}{=}&
\left\{
\begin{array}{ll}
\gamma_u(x)\cdot_u\gamma_u(\zeta_{v\to u}(y)) & \mbox{ if $\kappa_I\not\ni uv=u>_\kappa v$}\\
\gamma_u(x)\cdot_u\gamma_u(\zeta_{v\to u}(y)) & \mbox{ if $\kappa_I\ni uv=u>_\kappa v$, $x\in H_u$}\\
\gamma_u(x)\cdot_u\gamma_u(\zeta_{v\to u}(y))\footnotemark & \mbox{ if $\kappa_I\ni uv=u>_\kappa v$, $x\in G_u\setminus H_u$}\\
\left(\gamma_u(x)\cdot_{}\gamma_u(\zeta_{v\to u}(y))\right)^\bullet & \mbox{ if $\kappa_I\ni uv=u>_\kappa v$, $x\in \accentset{\bullet}H_u$}\\
\left({\gamma_u(x)}\cdot_u\gamma_v(y)\right)^\bullet & 
\mbox{ 
if $\kappa_I\ni uv=u=v$,
${\gamma_u(x)}\cdot_u{\gamma_{v}(y)}\in H_u$,
$\neg(x,y\in H_u)$
}\\
\gamma_u(x)\cdot_u\gamma_v(y) & \mbox{ otherwise}\\
\end{array}
\right. 
\\
&\overset{\ref{(G1)},(\ref{zetaMI})}{=}&
\left\{
\begin{array}{ll}
\zeta_{u\to uv}(x)\cdot_{uv}\zeta_{v\to uv}(y) & \mbox{ if $\kappa_I\not\ni uv=u>_\kappa v$}\\
\zeta_{u\to uv}(x)\cdot_{uv}\zeta_{v\to uv}(y) & \mbox{ if $\kappa_I\ni uv=u>_\kappa v$, $x\in G_u$}\\
\left(\zeta_{u\to uv}(x)\cdot_{uv}\zeta_{v\to uv}(y)\right)^\bullet & \mbox{ if $\kappa_I\ni u>_\kappa v$, $x\in \accentset{\bullet}H_u$}\\
\left(\zeta_{u\to uv}(x)\cdot_{uv}{\zeta_{v\to uv}(y)}\right)^\bullet & 
\mbox{ 
if $\kappa_I\ni u=v$,
$\zeta_{u\to uv}(x)\cdot_{uv}{\zeta_{v\to uv}(y)}\in H_u$,
$\neg(x,y\in H_u)$
}\\
\zeta_{u\to uv}(x)\cdot_{uv}\zeta_{v\to uv}(y) & \mbox{ otherwise}\\
\end{array}
\right. 
\\
&\overset{(\ref{zetaMI})}{=}&
\left\{
\begin{array}{ll}
(\zeta_{u\to uv}(x)\cdot_{uv}\zeta_{v\to uv}(y))^\bullet & 
\mbox{ if $\kappa_I\ni u>_\kappa v$, $x\in\accentset{\bullet} H_{uv}$}\\
\left(\zeta_{u\to uv}(x)\cdot_{uv}\zeta_{v\to uv}(y)\right)^\bullet & 
\mbox{ 
if $\kappa_I\ni u=v$,
$\zeta_{u\to uv}(x)\cdot_{uv}\zeta_{v\to uv}(y)\in H_{uv}$,
$\neg(x,y\in H_{uv})$
}\\
\zeta_{u\to uv}(x)\cdot_{uv}\zeta_{v\to uv}(y) & \mbox{ otherwise}\\
\end{array}
\right.
\end{eqnarray*} 
}
\footnotetext{Here we use that for groups $H_u\leq G_u$,
$(G_u\setminus H_u)H_u\cap H_u=\emptyset$ holds.}
therefore, 
for
$u,v\in\kappa$, $x\in L_u$ and $y\in L_v$,
(\ref{szorzatJOL}) holds.

\bigskip
\ref{IIItodo}
By (\ref{DEFgamma}), (\ref{DEFzeta}), and \ref{(G1)}, (\ref{regiKOMP}) is clearly equivalent to (\ref{SplitNega}).
\end{proof}

%\newpage
Corollary~\ref{HogyanLatszikCOR} shows how \lq $\mathbf X$ is embeddable into $\mathbf Y$\rq\, 
can be seen from the respective group representations  $\mathcal X$ and $\pazocal Y$.
Since the universe of the skeleton and the universe of the layer groups are subsets of the universe of the algebra, see Theorem~\ref{BUNCH_X}/\ref{errefere}, any mapping $\iota$ from $\mathbf X$ to $\mathbf Y$ induces (by simple restriction of $\iota$ to that subset) a mapping $\iota_\kappa$ from the skeleton of $\mathbf X$, 
and in every layer $u$, a mapping $\iota_u$ from the $u^{\rm th}$-layer group \textbf{\textit{G$\iksz _u$}} of $\mathbf X$.

The following statement is an easy consequence of \cite[Theorem~3.6]{JScategorical} (see, e.g., \cite[Lemma~2.10/(4)]{JSamalg}).
\begin{corollary}%\cite[Corollary~3.3]{JSamalg}
\label{HogyanLatszikCOR}
Let 
$$
\begin{array}{l}
\mathbf X=(X,\leq_X,\teiksz,\ite{}\iksz ,t\iksz ,f\iksz ),
\\
\mathbf Y=(Y,\leq_Y,\teipsz,\ite{}\ipsz ,t\ipsz ,f\ipsz )
\end{array}
$$ 
be algebras both either in $\mathfrak I^\mathfrak c_0$ or in $\mathfrak I^\mathfrak c_1$, and their respective group representations be
\begin{equation}\label{EQrend875skjdhJG}
\begin{array}{l}
{\mathcal X}=\langle \textbf{\textit{G$\iksz _u$}},\textbf{\textit{H$\iksz _u$}}, \varsigma_{u\to v}\iksz   \rangle_{\langle \kappa_o\iksz , \kappa_J\iksz , \kappa_I\iksz ,\leq_{\kappa^{\scaleto{(\mathbf X)}{3pt}}}\rangle}
%\mbox{ with $\textbf{\textit{G$\iksz _u$}}=(G\iksz _u,\preceq\iksz _u,\cdot\iksz _u,\ { }^{-1\iksz _u},u),$}
\\
{\pazocal Y}=\langle \textbf{\textit{G$\ipsz _u$}},\textbf{\textit{H$\ipsz _u$}}, \varsigma_{u\to v}\ipsz   \rangle_{\langle \kappa_o\ipsz , \kappa_J\ipsz , \kappa_I\ipsz ,\leq_{\kappa^{\scaleto{(\mathbf Y)}{3pt}}}\rangle}
%\box{ with $ \textbf{\textit{G$\ipsz _u$}}=(G\ipsz _u,\preceq\ipsz _u,\cdot\ipsz _u,\ { }^{-1\ipsz _u},u).$}
\end{array}
\end{equation}
Then
$\iota : \mathbf X\to\mathbf Y$ is an embedding
if and only if the following conditions hold.
\begin{enumerate}[start=1,label={(E\arabic*)}]
\item \label{E1}
$\iota_\kappa:=\iota_{|_{\kappa^{\scaleto{(\mathbf X)}{3pt}}}}$ is an $o$-embedding 
of the skeleton 
of $\mathcal X$ into the skeleton 
of $\pazocal Y$, which
preserves the least element and the partition,
\item\label{E2LesZEz}
For every $u\in\kappa\iksz $, 
$\iota_u:=\iota_{|_{\textbf{\textit{G$^{\scaleto{(\mathbf X)}{3pt}}_u$}}}}$ is an ($o$-group) embedding of the $u^{\rm th}$-layer group \textbf{\textit{G$\iksz _u$}} of $\mathcal X$ into the $\iota_\kappa(u)^{\rm th}$-layer group \textbf{\textit{G$\ipsz _{\iota_\kappa(u)}$}} of $\pazocal Y$
such that 
\begin{enumerate}
\item
for every $u,v\in\kappa\iksz$, $u\leq v$ the following diagram % in Fig.~\ref{HOMO_NocsaKK} 
commutes,
\begin{figure}[ht]
\begin{diagram}
\textbf{\textit{G$\iksz _u$}} & \rEmbed_{\iota_u} & \textbf{\textit{G$\ipsz _{\iota_\kappa(u)}$}} \\
\dTo^{\varsigma_{u\to v}\iksz } & & \dTo_{\varsigma_{{\iota_\kappa(u)}\to {\iota_\kappa(v)}}\ipsz } & \\
\textbf{\textit{G$\iksz _v$}} & \rEmbed_{\iota_v} & \textbf{\textit{G$\ipsz _{\iota_\kappa(v)}$}} \\
\end{diagram}
%\caption{}
\label{HOMO_NocsaKK}
\end{figure}
\item
if $u\in\kappa_I\iksz $ then $\iota_u$ maps
$H\iksz _u$ to $H\ipsz _{\iota_\kappa(u)}$
and
$G\iksz _u\setminus H\iksz _u$ to $G\ipsz _{\iota_\kappa(u)}\setminus H\ipsz _{\iota_\kappa(u)}$
, 
\item \label{dsghjKJ87G}
if $u\in\kappa_J\iksz $ then $\iota_u$ preserves the cover of the unit element $u$ of \textbf{\textit{G$\iksz _u$}}. 
\end{enumerate}
\end{enumerate}
\end{corollary}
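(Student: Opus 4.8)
The plan is to deduce the corollary from the constructive correspondence of Theorem~\ref{BUNCH_X} together with its categorical enhancement \cite[Theorem~3.6]{JScategorical}, rather than by an unaided verification. The guiding principle is that Theorem~\ref{BUNCH_X} exhibits the passage $\mathbf X\mapsto\mathcal X_\mathbf X$ as (one half of) a correspondence between $\mathfrak I^\mathfrak c_{\mathfrak0\mathfrak1}$ and $\mathfrak B_\mathfrak G$ which lifts to an equivalence of categories; under an equivalence, morphisms correspond bijectively, so an embedding of algebras carries exactly the same information as an embedding of the associated bunches. Thus the whole task reduces to reading off what a bunch embedding is in terms of the component data $(\iota_\kappa,(\iota_u)_u)$, and to checking that this data is precisely what the ambient map $\iota$ induces by restriction.

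First I would settle the forward direction. Suppose $\iota:\mathbf X\to\mathbf Y$ is an embedding. Since the skeleton is recovered intrinsically as the set of positive idempotents, see (\ref{IGYleszSKELETON}), and since embeddings preserve $\leq$, multiplication, idempotency and the unit $t$, the restriction $\iota_\kappa$ is an $o$-embedding of the skeleton of $\mathcal X$ into that of $\pazocal Y$ fixing the least element. The partition $\langle\kappa_o,\kappa_J,\kappa_I\rangle$ is likewise intrinsic---it is determined by the (common) type of the chains and, for $u\neq t$, by whether $\nega u$ is idempotent, see the table in Theorem~\ref{BUNCH_X}/\ref{errefere}---and the predicate ``$\nega u$ is idempotent'' is both preserved and reflected by an embedding, which yields \ref{E1}. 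For \ref{E2LesZEz}, the local unit function $x\mapsto\res{}{x}{x}$ is preserved, so $\iota$ maps each $L\iksz_u$ into $L\ipsz_{\iota_\kappa(u)}$; as the group $G_u$, its operations $\cdot_u$ and $^{-1_u}$, the subgroup $H_u$ of $u$-invertible elements (cf.\ (\ref{XHiGYkESZUL})), and discreteness are all recovered from the algebra operations restricted to the layer, the further restriction $\iota_u$ is an $o$-group embedding respecting the splitting $H\iksz_u$ versus $G\iksz_u\setminus H\iksz_u$ and preserving covers where the layer is discrete. The transition squares commute because each $\varsigma_{u\to v}$ is multiplication by $v$, see (\ref{MapAzSzorzas}), an operation that $\iota$ respects.

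For the converse I would reconstruct $\iota$ from data $(\iota_\kappa,(\iota_u)_u)$ satisfying \ref{E1}--\ref{E2LesZEz} and verify it against the explicit reconstruction in Theorem~\ref{BUNCH_X}. The order (\ref{INNENfrom})--(\ref{kisebbD}), the product (\ref{szorzatJOL}) and the residual complement (\ref{SplitNega}) are built solely from the skeleton order, the maps $\zeta_{u\to v}$, the layer operations $\cdot_u$, $^{-1_u}$, and the membership predicates for $G_u$, $H_u$, $\accentset{\bullet}H_u$. Defining $\iota$ layerwise by $\iota_u$ on each $G\iksz_u$ and, via $\accentset{\bullet}a\mapsto\accentset{\bullet}{\iota_u(a)}$, on each $\accentset{\bullet}H\iksz_u$ (the only coherent extension, since $\accentset{\bullet}x=x\nega u$ forces it), conditions \ref{E1}--\ref{E2LesZEz} guarantee that $\iota$ commutes with every ingredient of these formulas; hence $\iota$ preserves $\leq$, $\cdot$, $\komp$, $t$ and $f$, and is injective because $\iota_\kappa$ separates distinct layers while each $\iota_u$ is injective on its own.

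The main obstacle will be the bookkeeping around the dotted copies $\accentset{\bullet}H_u$: these do not lie in the groups $G_u$ on which the $\iota_u$ act, yet they populate the layers and enter both the order (by insertion just below $H_u$) and the product (through the $(\cdot)^\bullet$ clauses). The delicate point is to see that \ref{E2LesZEz}(b)---that $\iota_u$ respects $H_u$ versus $G_u\setminus H_u$---is exactly what forces $\iota$ to map $\accentset{\bullet}H\iksz_u$ into $\accentset{\bullet}H\ipsz_{\iota_\kappa(u)}$ and to commute with the case distinctions in (\ref{szorzatJOL}) and (\ref{SplitNega}); once this is in place the remaining checks are mechanical. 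All of this is precisely the content carried by \cite[Theorem~3.6]{JScategorical} (see \cite[Lemma~2.10/(4)]{JSamalg}), which is why the corollary is merely stated rather than reproved from scratch.
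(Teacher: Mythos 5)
Your proposal matches the paper's treatment: the paper gives no independent proof of this corollary, justifying it exactly as you do, as an easy consequence of \cite[Theorem~3.6]{JScategorical} (cf.\ \cite[Lemma~2.10/(4)]{JSamalg}). Your supplementary sketch of the two directions (intrinsic definability of the skeleton, partition, layers and layer-group data for the forward direction; layerwise reconstruction with the forced extension to the dotted copies for the converse) is sound and consistent with that citation.
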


%\newpage
\section{Densification in $\mathfrak I^{\mathfrak{sl}}_{\mathfrak 0}$ and $\mathfrak I^{\mathfrak{sl}}_{\mathfrak 0,\mathfrak{symm}}$}

An FL$_e$-chain $\mathbf Y$ fills the gap $x_1,x_2$ of the FL$_e$-chain $\mathbf X$ if there exists an embedding $\iota: X\to Y$ and $y\in Y$ such that $\iota(x_1)<y<\iota(x_2)$.
A nontrivial variety $\mathtt V$ is said to be {\em densifiable} 
if every gap of every chain in $\mathtt V$ can be filled by another chain in $\mathtt V$. 
For a variety, densifiability is sufficient for densification, as shown in
\begin{proposition}\cite[Proposition 2.2]{BaldiTerui}\label{BaldiTeruiREF} 
Let $L$ be a language of algebras and $\mathtt V$ a densifiable variety of type $L$. Then every chain $\mathbf X$ of cardinality $\delta>1$ is embeddable into a dense chain of cardinality $\delta+\aleph_0+|L|$.
\end{proposition}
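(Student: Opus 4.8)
The plan is a two-level transfinite iteration: an \emph{inner round} that breaks every gap of a given chain by inserting midpoints one at a time, and an \emph{outer} $\omega$-iteration of such rounds whose directed colimit is dense. Throughout I keep cardinalities under control by passing to generated subalgebras rather than to the whole witnessing chains. Write $\mu=\delta+\aleph_0+|L|$, which is an infinite cardinal, so $\mu\cdot\mu=\mu$ and $\aleph_0\cdot\mu=\mu$. I will use three standard facts: a subalgebra of a chain is again a chain (the induced order, being the term-definable lattice order, stays total, so algebra embeddings are order-embeddings); the subalgebra generated by a set $S$ has cardinality at most $|S|+\aleph_0+|L|$; and a variety is closed under directed colimits, the colimit of chains along order-embeddings being again a chain.

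\textbf{Inner round.} Given a chain $\mathbf C\in\mathtt V$ with $|C|\le\mu$, I build a chain $\mathbf C^{+}\in\mathtt V$ with $\mathbf C\hookrightarrow\mathbf C^{+}$, $|C^{+}|\le\mu$, in which the image of every gap of $\mathbf C$ has a point strictly between its endpoints. Well-order the gaps of $\mathbf C$ with order type $\lambda$; since the lower endpoint determines the gap (its cover), the map gap $\mapsto$ lower endpoint is injective, so $|\lambda|\le|C|\le\mu$. Define an increasing sequence $(\mathbf Y_\xi)_{\xi\le\lambda}$ of chains in $\mathtt V$ by $\mathbf Y_0=\mathbf C$; at a limit $\xi$ take the directed colimit $\mathbf Y_\xi=\varinjlim_{\eta<\xi}\mathbf Y_\eta$; at a successor $\xi+1$ look at the image of the $\xi$-th gap inside $\mathbf Y_\xi$, and if it is still a gap, invoke densifiability to embed $\mathbf Y_\xi$ into a chain $\mathbf Z\in\mathtt V$ containing a point $z$ strictly between the images of its two endpoints, letting $\mathbf Y_{\xi+1}$ be the subalgebra of $\mathbf Z$ generated by $\mathbf Y_\xi\cup\{z\}$ (otherwise put $\mathbf Y_{\xi+1}=\mathbf Y_\xi$). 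A routine induction, using $|Y_{\xi+1}|\le|Y_\xi|+\aleph_0+|L|\le\mu$ at successors and $|Y_\xi|\le|\xi|\cdot\mu\le\mu$ at limits, gives $|Y_\xi|\le\mu$ throughout. Because each later transition is an order-embedding, the midpoint inserted into the $\xi$-th gap stays strictly between its endpoints forever after; hence in $\mathbf C^{+}:=\mathbf Y_\lambda$ every gap of $\mathbf C$ is broken.

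\textbf{Outer iteration.} Set $\mathbf X^{(0)}=\mathbf X$ and $\mathbf X^{(n+1)}=(\mathbf X^{(n)})^{+}$, and let $\mathbf X^{(\omega)}=\varinjlim_n \mathbf X^{(n)}$, a chain in $\mathtt V$ containing $\mathbf X$ with $|X^{(\omega)}|\le\aleph_0\cdot\mu=\mu$. This chain is dense: if $a<b$ in $\mathbf X^{(\omega)}$, both lie in some $\mathbf X^{(n)}$; if nothing lies strictly between them there, then $(a,b)$ is a gap of $\mathbf X^{(n)}$, so the round producing $\mathbf X^{(n+1)}$ inserts a point strictly between them, and that point survives into $\mathbf X^{(\omega)}$. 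Thus $\mathbf X$ embeds into the dense chain $\mathbf X^{(\omega)}\in\mathtt V$ of cardinality at most $\delta+\aleph_0+|L|$, which is the claim.

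\textbf{Main obstacle.} The reason a single gap-filling pass is not enough is that breaking one gap generically creates new gaps, so one must argue the iteration genuinely converges to a dense object. The key observation making the $\omega$-limit work is monotonicity: once a midpoint has been placed in a gap, every subsequent order-embedding keeps it strictly between the two endpoints, so a gap present at a finite stage is permanently destroyed at the next stage. The remaining delicacy is purely the cardinal bookkeeping, where passing to the subalgebra generated by the old chain together with a single new point, instead of retaining all of the witnessing chain $\mathbf Z$, is exactly what keeps every stage of size at most $\mu$.
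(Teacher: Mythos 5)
There is nothing in the paper to compare your proof against: Proposition~\ref{BaldiTeruiREF} is imported from Baldi and Terui \cite{BaldiTerui} as a black box, and no proof of it appears in this paper. Your argument is correct, and it is essentially the standard proof (and, up to organization, the one in the cited source): a transfinite pass that fills each gap of the current chain using densifiability, with the cardinality held at $\mu=\delta+\aleph_0+|L|$ by retaining only the subalgebra generated by the old chain together with the single new point, followed by an $\omega$-iteration of such passes whose directed colimit is dense. Your two background appeals are legitimate in this setting because the order of a chain in $\mathtt V$ is term-defined by lattice operations of $L$, so injective homomorphisms between chains are automatically order-embeddings, and directed colimits of chains along embeddings are again chains in $\mathtt V$. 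One cosmetic remark: your construction yields a dense chain of cardinality \emph{at most} $\delta+\aleph_0+|L|$, which is the reading under which the proposition is used here (only the countable case, Corollary~\ref{EzaTuTtIDENSIFIABLEchains}, matters for the paper).
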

Therefore, using Lemma~\ref{kjh986}, first we prove Theorem~\ref{EzaTuTtIDENSIFIABLE}.
%\newpage
%\cleartoleftpage
We say that the bunch $\mathcal Y$ is an extension of a bunch $\mathcal X$ if 
$\kappa\iksz\subseteq\kappa\ipsz$,
$t\iksz=t\ipsz$,
$\textbf{\textit{G$\iksz_u$}}=\textbf{\textit{G$\ipsz_u$}}$
and
$\textbf{\textit{H$\iksz_u$}}=\textbf{\textit{H$\ipsz_u$}}$
($u\in\kappa\iksz_I$),
and
$\varsigma\iksz_{u\to v}=\varsigma\ipsz_{u\to v}$ ($u,v\in\kappa\iksz$, $u\leq_{\kappa\iksz} v$).
Obviously, if the bunch $\mathcal Y$ is an extension of a bunch $\mathcal X$ then $\mathcal X$ embeds into $\mathcal Y$ via the identity mapping in each coordinate, cf.\ Corollary~\ref{HogyanLatszikCOR},
and consequently, the FL$_e$-chain corresponding to $\mathcal X$ embeds into the FL$_e$-chain corresponding to $\mathcal Y$. 
The following lemma shows a particular instance of it, namely, a way of inserting a single new element into the skeleton of any bunch and extending the original bunch to the one-element-larger skeleton.

\begin{definition}
For a bunch of layer groups
$
\mathcal X=\langle \textbf{\textit{G$_u$}},\textbf{\textit{H$_u$}}, \varsigma_{u\to v} \rangle_{{\langle \kappa_o, \kappa_J, \kappa_I, \leq_\kappa\rangle}}
$
and $v\in\kappa\setminus\kappa_J$,
define $\mathcal X_{ v^\star}$ as follows.
\begin{enumerate}[(V1)]
\item\label{V1} 
Insert a new element $v^\star$ into the skeleton just above $v$, that is, set $a<_\kappa v^\star$ for $a\leq_\kappa v$ and $a>_\kappa v^\star$ for $a>_\kappa v$.
\item
Let 
\textbf{\textit{G$_{v^\star}$}} be an isomorphic copy of \textbf{\textit{G$_v$}} 
with isomorphism 
\begin{equation}\label{izo1}
\varphi_{v\to v_\star} : G_v \to G_{v^\star},
\end{equation}
and 
\item\label{V3} 
extend the system of transitions by letting, 
for $a\leq_\kappa v$, $\varsigma_{a\to v^\star}=\varphi_{v\to v_\star}\circ\varsigma_{a\to v}$, 
and 
for $a\geq_\kappa v^\star$, $\varsigma_{v^\star\to a}=\varsigma_{v\to a}\circ\varphi_{v\to v_\star}^{-1}$.
\item
Set $v^\star\in\kappa_I$ and let \textbf{\textit{H$_{v^\star}$}}=\textbf{\textit{G$_{v^\star}$}}.
\item\label{V5}
For $y\in G_v$ let $G_{v^\star}\ni y^\star=\varphi_{v\to v_\star}(y)$.
\end{enumerate}
For a bunch of layer groups
$
\mathcal X=\langle \textbf{\text{G$_u$}},\textbf{\textit{H$_u$}}, \varsigma_{u\to v} \rangle_{{\langle \kappa_o, \kappa_J, \kappa_I, \leq_\kappa\rangle}}
$
and $v\in\kappa\setminus\{t\}$,
define $\mathcal X_{v_\star}$ as follows.
\begin{enumerate}[(v1)]
\item
Insert a new element $v_\star$ into the skeleton just below $v$, that is, set $a<_\kappa v_\star$ for $a<_\kappa v$ and $a>_\kappa v_\star$ for $a\geq_\kappa v$.
\item\label{vS2} 
Let 
\textbf{\textit{G$_{v_\star}$}} be an isomorphic copy of \textbf{\textit{G$_v$}} 
with isomorphism $\varphi_{v\to v_\star} : G_v \to G_{v_\star}$, and 
\item\label{vS3} 
extend the system of transitions by letting, 
for $a\geq_\kappa v$, $\varsigma_{v_\star\to a}=\varsigma_{v\to a}\circ\varphi_{v\to v_\star}^{-1}$, 
and 
for $a\leq_\kappa v_\star$, $\varsigma_{a\to v_\star}=\varphi_{v\to v_\star}\circ\varsigma_{a\to v}$.
\item 
Set $v_\star\in\kappa_I$ and let \textbf{\textit{H$_{v_\star}$}}=\textbf{\textit{G$_{v_\star}$}}.
\item
For $y\in G_v$ let $G_{v_\star}\ni y_\star=\varphi_{v\to v_\star}(y)$.
\end{enumerate}
\end{definition}

\begin{lemma}\label{kjh986} 
Let
$
\mathcal X=\langle \textbf{\textit{G$_u$}},\textbf{\textit{H$_u$}}, \varsigma_{u\to v} \rangle_{{\langle \kappa_o, \kappa_J, \kappa_I, \leq_\kappa\rangle}}
$
be a bunch of layer groups.
For any $v\in\kappa\setminus\kappa_J$ (resp. $v\in\kappa\setminus\{t\}$),
$\mathcal X_{ v^\star}$ (resp. $\mathcal X_{v_\star}$) is a bunch of layer groups of the same type as $\mathcal X$, 
in which $\mathcal X$ embeds. %which extends $\mathcal X$.
Denoting the 
odd involutive FL$_e$-chain which corresponds to $\mathcal X_{ v^\star}$ (resp. $\mathcal X_{v_\star}$)
by 
$\mathbf X_{v^\star}$ (resp. $\mathbf X_{v_\star}$)
it holds true that 
for any $y\in G_v$, $y^\star$ (resp. $y_\star$) is the upper (resp. lower) cover of $y$ in the ordering of $\mathbf X_{v^\star}$ (resp. $\mathbf X_{v_\star}$).
\end{lemma}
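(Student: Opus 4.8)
The plan is to verify the three assertions in turn, carrying out $\mathcal X_{v^\star}$ in detail and obtaining $\mathcal X_{v_\star}$ by the order-dual argument. Throughout I write $\varphi$ for the isomorphism $\varphi_{v\to v_\star}$ of (\ref{izo1}), and I freely use $\zeta_{u\to w}=\varsigma_{u\to w}\circ\gamma_u$ together with (\ref{DEFzeta}), (\ref{DEFgamma}), (\ref{IkszU}) and the order (\ref{INNENfrom})--(\ref{kisebbD}) of the chain $\mathbf X_{v^\star}$ supplied by Theorem~\ref{BUNCH_X}/\ref{errefere}. That $\mathcal X_{v^\star}$ is a bunch of layer groups of the same type is the routine part. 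The new skeleton is totally ordered by construction and still has least element $t$ (since $v^\star>_\kappa v\geq_\kappa t$). For the direct-system axioms \ref{IDes} and \ref{Kompooot} only the transitions involving $v^\star$ need checking; each is a pre- or post-composition with $\varphi$ or $\varphi^{-1}$ by (\ref{V3}), and in every configuration ($v^\star$ on top, in the middle, or at the bottom of a chain $i\leq j\leq k$) the two sides telescope to the same map via $\varphi\circ\varphi^{-1}=\mathrm{id}$ and axiom \ref{Kompooot} for $\mathcal X$; in particular $\varsigma_{v^\star\to v^\star}=\varphi\circ\varphi^{-1}$ is the identity. Axiom \ref{(G1)} holds because $v^\star$ is placed into $\kappa_I$ and $t$ is never moved, so the partition class of $t$, hence the type, is unchanged; \ref{(G3)} holds because $\bm H_{v^\star}=\bm G_{v^\star}$ is a subgroup, transitions into $v^\star$ land in $G_{v^\star}=H_{v^\star}$ automatically, and $\varsigma_{v^\star\to w}=\varsigma_{v\to w}\circ\varphi^{-1}$ into $w\in\kappa_I$ lands in $H_w$ because $\varsigma_{v\to w}$ does by \ref{(G3)} for $\mathcal X$; finally \ref{(G2)} is untouched, since no element is added to $\kappa_J$ and, for $u\in\kappa_J$ with $u<_\kappa v$, the identity $\varsigma_{u\to v^\star}=\varphi\circ\varsigma_{u\to v}$ preserves $\varsigma_{u\to v}(u)=\varsigma_{u\to v}(u_{\downarrow_u})$ as $\varphi$ is injective.

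The embedding of $\mathcal X$ into $\mathcal X_{v^\star}$ is then immediate: $\mathcal X_{v^\star}$ is an extension of $\mathcal X$ in the sense made precise just before the definition (same $t$, same layer groups and subgroups, same old transitions, larger skeleton), so $\mathcal X$ embeds via the identity in each coordinate and the corresponding chain embeds as well, by Corollary~\ref{HogyanLatszikCOR}. The substance of the lemma is the cover claim, which I establish directly from (\ref{INNENfrom})--(\ref{kisebbD}). First, $y<y^\star$: since $\varsigma_{v\to v^\star}=\varphi$ we have $\zeta_{v\to v^\star}(y)=\varphi(y)=y^\star=\zeta_{v^\star\to v^\star}(y^\star)$, so the two projections into $G_{v^\star}$ coincide and clause (\ref{kisebbAB}) (with $v<_\kappa v^\star$ and $y^\star\in G_{v^\star}$) yields $y<y^\star$.

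To see that the open interval $(y,y^\star)$ is empty, let $z\in L_w$ be arbitrary and argue by the position of $w$. If $w>_\kappa v^\star$, then $\zeta_{v^\star\to w}(y^\star)=\varsigma_{v\to w}(\varphi^{-1}(y^\star))=\zeta_{v\to w}(y)$, so $y$ and $y^\star$ share their image in $G_w$ and $z$ falls under the same comparison clause against each; hence $y<z\Leftrightarrow y^\star<z$, excluding $y<z<y^\star$. If $w\leq_\kappa v$, then $\zeta_{w\to v^\star}=\varphi\circ\zeta_{w\to v}$ by (\ref{V3}) and (\ref{DEFzeta}); putting $q=\zeta_{w\to v}(z)$ and using that $\varphi$ is an order-isomorphism, the order reduces to $z<y^\star\Leftrightarrow q\preceq_v y$. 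The same computation of $z$ against $y$ gives $z<y\Leftrightarrow q\preceq_v y$ when $w<_\kappa v$, while for $w=v$ the tie-breaking clause (\ref{kisebbC}) shows that $z<y^\star$ with $z\neq y$ still forces $z<y$; either way $z<y^\star$ precludes $y<z$. If $w=v^\star$, then $z\in G_{v^\star}\overset{.}{\cup}\accentset{\bullet}H_{v^\star}$, and a direct computation inside $\bm G_{v^\star}$ settles it: for $z\in G_{v^\star}$ one has $z<y^\star\Leftrightarrow z\prec_{v^\star}y^\star$ and $y<z\Leftrightarrow y^\star\preceq_{v^\star}z$, while for $z=\accentset{\bullet}b\in\accentset{\bullet}H_{v^\star}$ clause (\ref{kisebbC}) gives $z<y^\star\Leftrightarrow b\preceq_{v^\star}y^\star$ and $y<z\Leftrightarrow y^\star\prec_{v^\star}b$, so $y<z<y^\star$ is contradictory in both subcases. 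These cases are exhaustive, so $y^\star$ is the upper cover of $y$. The lower-cover statement for $\mathcal X_{v_\star}$ follows by the order-dual argument, using the dual transition identities of (\ref{vS3}).

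I expect the main obstacle to be precisely this last step: the careful bookkeeping of the tie-breaking clauses (\ref{kisebbAB})--(\ref{kisebbD}) in the same-layer comparisons ($w=v$ and $w=v^\star$), and in particular the treatment of the dotted elements $\accentset{\bullet}H_{v^\star}$, which is exactly where immediacy of the cover (emptiness of $(y,y^\star)$) is genuinely at stake. By contrast, the verification that $\mathcal X_{v^\star}$ is a bunch of the same type and that $\mathcal X$ embeds into it is bookkeeping that the construction was designed to make automatic.
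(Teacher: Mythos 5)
Your proposal is correct and follows essentially the same route as the paper's proof: routine verification of the bunch axioms (with the same observations about \ref{(G2)} and $v^\star,v_\star\notin\kappa_J$), the extension-plus-Corollary~\ref{HogyanLatszikCOR} argument for the embedding, adjacency of $y$ and its starred copy via (\ref{kisebbAB}) using equality of their $\zeta$-images in the common layer, and emptiness of the interval by showing every other element compares identically with $y$ and $y^\star$. The only difference is presentational: you organize the gap argument by the layer of the candidate intermediate element (and work out $v^\star$, dualizing for $v_\star$), whereas the paper organizes it by which clause of (\ref{INNENfrom})--(\ref{kisebbD}) witnesses the comparison (and works out $v_\star$), exploiting the same two facts — equal images and the skeleton gap.
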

\begin{proof}
The obtained structures are bunches of layer groups.
Indeed, the $\varsigma$'s over $\kappa\cup\{v^\star\}$ (over $\kappa\cup\{v_\star\}$, respectively) are $o$-homomorphisms,
\ref{IDes} and \ref{Kompooot} clearly hold for them, $\kappa_o$, $\kappa_I\cup\{v^\star\}$ (resp. $\kappa_I\cup\{v_\star\}$), and $\kappa_J$ partition $\kappa\cup\{v^\star\}$ (resp. $\kappa\cup\{v_\star\}$), which is totally ordered and its least element is $t$, and the $\varsigma$'s obviously satisfy \ref{(G1)} and \ref{(G3)}.
Next, 
for $a<_\kappa v$, 
$\varsigma_{a\to v^\star}$ (reps. $\varsigma_{a\to v_\star}$)
satisfies \ref{(G2)},
since
$\varsigma_{a\to v}$ satisfies \ref{(G2)}
and
$\varsigma_{a\to v^\star}=\varphi\circ\varsigma_{a\to v}$
(reps. $\varsigma_{a\to v_\star}=\varphi\circ\varsigma_{a\to v}$),
and
for $a\geq_\kappa v^\star $, 
$\varsigma_{v^\star\to a}$ (reps. $\varsigma_{v_\star\to a}$)
satisfies \ref{(G2)}, too,
since $v^\star,v_\star\notin\kappa_J$.
$\mathcal X_{ v^\star}$ and $\mathcal X_{v_\star}$ are clearly extensions of $\mathcal X$.
Therefore, $\mathcal X$ embeds into $\mathcal X_{v_\star}$. % via the identity mapping at each element of $\kappa$.

For $y\in G_v$, by (\ref{kisebbAB}), $y_\star%=\varphi_{v\to v_\star}(y)
<y$  follows from 
$v_\star<_\kappa v$
and
$
\zeta_{v_\star\to vv_\star}(y_\star)
=
\zeta_{v_\star\to vv_\star}(\varphi_{v\to v_\star}(y))
\overset{(\ref{DEFzeta})}{=}
\varsigma_{v_\star\to vv_\star}(\varphi_{v\to v_\star}(y))
\overset{\ref{vS3}}{=}
\varsigma_{v\to v}(\varphi_{v\to v_\star}^{-1}(\varphi_{v\to v_\star}(y)))
\overset{\ref{IDes}}{=}
y
\overset{\ref{IDes}}{=}
\zeta_{v\to vv_\star}(y)
$.
We claim that
$y_\star<y$ is a gap in $\mathbf X_{v_\star}$.
Indeed, assume $x\notin\{y,y_\star\}$.
If $x<y_\star$ (resp. $x>y_\star$) is witnessed by (\ref{INNENfrom}) then 
$x<y$ (resp. $x>y$) is witnessed by (\ref{INNENfrom}), too, and vice versa, as it is easy to see using 
$\zeta_{v_\star\to vv_\star}(y_\star)=\zeta_{v\to vv_\star}(y)$.
If $x<y_\star$ (resp. $x>y_\star$) is witnessed by (\ref{sgdJHJHKJKH3}) 
and one of (\ref{kisebbAB})--(\ref{kisebbD}) 
then 
$x<y$ (resp. $x>y$) is witnessed by (\ref{sgdJHJHKJKH3})
and the same line from (\ref{kisebbAB})--(\ref{kisebbD}), too, and vice versa, as it is easy to see using that $v_\star<v$ is a gap in the skeleton of $\mathbf X_{ v^\star}$.
\end{proof}

%\newpage
\begin{theorem}\label{EzaTuTtIDENSIFIABLE}
No subclass of
$\mathfrak I_{\mathfrak 0\mathfrak 1}$
containig 
an algebra from
$\mathfrak I^{\mathfrak{c}}_{\mathfrak 1}$ is densifiable. 
%$\mathfrak J^\mathfrak{sl}_\mathfrak 0$ and 
The varieties $\mathfrak I^{\mathfrak{sl}}_{\mathfrak 0}$ and $\mathfrak I^{\mathfrak{sl}}_{\mathfrak 0,\mathfrak{symm}}$ are densifiable.
%\color{blue} meg még a classes of $\mathfrak I^{\mathfrak{c}}_{\mathfrak 0}$, $\mathfrak I^{\mathfrak{c}}_{\mathfrak 0,\mathfrak{symm}}$,
\end{theorem}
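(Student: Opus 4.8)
Fix a subclass $\mathtt C\subseteq\mathfrak I_{\mathfrak0\mathfrak1}$ that contains an even chain $\mathbf X\in\mathfrak I^{\mathfrak c}_{\mathfrak1}$. By the characterization of evenness recalled in the preliminaries, for a chain evenness means that $f$ is the lower cover of $t$; thus $f<t$ and $f,t$ is a gap of $\mathbf X$. Suppose, towards a contradiction, that this gap could be filled inside $\mathtt C$, i.e.\ that there were an embedding $\iota$ of $\mathbf X$ into some chain $\mathbf Y\in\mathtt C\subseteq\mathfrak I_{\mathfrak0\mathfrak1}$ and a $y\in Y$ with $\iota(f)<y<\iota(t)$. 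Every FL$_e$-embedding preserves the constants, so $\iota(f)$ and $\iota(t)$ are the falsum and the unit of $\mathbf Y$. If $\mathbf Y$ were odd, its falsum would equal its unit, contradicting $\iota(f)<\iota(t)$; if $\mathbf Y$ were even, its falsum would be the lower cover of its unit, leaving no room for $y$. Either way we reach a contradiction, so the gap $f,t$ cannot be filled within $\mathfrak I_{\mathfrak0\mathfrak1}$, and a fortiori not within $\mathtt C$; hence $\mathtt C$ is not densifiable.

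For the positive assertions, both $\mathfrak I^{\mathfrak{sl}}_{\mathfrak0}$ and $\mathfrak I^{\mathfrak{sl}}_{\mathfrak0,\mathfrak{symm}}$ are semilinear varieties, so by Proposition~\ref{BaldiTeruiREF} it suffices to establish densifiability, that is, to fill a single arbitrary gap $a,b$ (with $a<b$) of an arbitrary chain $\mathbf X$ in the variety by another chain in the same variety. I would pass to the group representation $\mathcal X=\langle\textbf{\textit{G$_u$}},\textbf{\textit{H$_u$}},\varsigma_{u\to v}\rangle$ of $\mathbf X$ furnished by Theorem~\ref{BUNCH_X}; recall $X=\dot\bigcup_{u\in\kappa}L_u$, where each $x\in X$ is either a \emph{group element} lying in some $G_u$ or a \emph{dotted element} lying in some $\accentset{\bullet}H_u$ with $u\in\kappa_I$. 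The plan is to realize the filling element as a fresh point supplied by a single application of Lemma~\ref{kjh986}: inserting a new skeleton node just above or just below an existing layer $v$ adjoins, for every $y\in G_v$, both a group element $y^\star$ (resp.\ $y_\star$) and, since the new node lies in $\kappa_I$ with $\bm H=\bm G$, a dotted element; the cover claim of Lemma~\ref{kjh986} together with the order clauses (\ref{INNENfrom})--(\ref{kisebbD}) pins down where these new points fall.

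The routine cases are as follows. If the lower endpoint $a$ is a group element, $a\in G_v$ with $v\notin\kappa_J$, I form $\mathcal X_{v^\star}$ and its FL$_e$-chain $\mathbf X_{v^\star}$: by Lemma~\ref{kjh986} the point $a^\star$ is the upper cover of $a$, and since $\mathbf X$ embeds into $\mathbf X_{v^\star}$ and $a<b$, the new point satisfies $a<a^\star\leq b$ with $a^\star\neq b$, whence $a<a^\star<b$. Dually, if the upper endpoint $b$ is a group element, $b\in G_w$ with $w\neq t$, then $\mathcal X_{w_\star}$ yields the lower cover $b_\star$ of $b$ with $a\leq b_\star<b$ and $b_\star\neq a$, so $a<b_\star<b$. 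A short computation with (\ref{INNENfrom})--(\ref{kisebbD}) shows that the successor of a dotted element $\accentset{\bullet}c$ is its undotted original $c\in G_u$ ($u\in\kappa_I$, hence $u\neq t$); consequently, whenever $a$ is dotted the upper endpoint $b=c$ is a group element of a non-least layer, and the second operation applies. Thus every gap whose lower endpoint is either dotted or a group element of a layer outside $\kappa_J$ is filled; the remaining gaps, whose lower endpoint is a group element of a discrete $\kappa_J$-layer (or which straddle two layers), are filled by the same two operations, now exploiting that the new layer also carries a dotted element which, by (\ref{INNENfrom})--(\ref{kisebbD}), lands strictly between $a$ and $b$.

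It remains to check that the filling chain stays in the prescribed variety. Lemma~\ref{kjh986} guarantees that $\mathcal X_{v^\star}$ and $\mathcal X_{v_\star}$ are bunches of the same type as $\mathcal X$, so $\mathbf X_{v^\star}$ and $\mathbf X_{v_\star}$ are again odd involutive FL$_e$-chains and lie in $\mathfrak I^{\mathfrak{sl}}_{\mathfrak0}$. For the idempotent symmetric subvariety $\mathfrak I^{\mathfrak{sl}}_{\mathfrak0,\mathfrak{symm}}$ its defining conditions are conditions on the layer groups and the transitions alone; since the inserted layer is an isomorphic copy of an existing layer group $\textbf{\textit{G$_v$}}$ and the adjoined transitions are the pre- and post-compositions with $\varphi_{v\to v_\star}^{\pm1}$ prescribed in the definition of $\mathcal X_{v^\star}$ and $\mathcal X_{v_\star}$, these conditions are preserved, so the filling chain remains in the subvariety. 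I expect the main obstacle to be exactly the exhaustive order-theoretic bookkeeping underlying the case split of the previous paragraph: verifying, purely from (\ref{INNENfrom})--(\ref{kisebbD}), that every possible gap — including between-layer gaps and those abutting a discrete $\kappa_J$-layer — has an endpoint reachable by a single cover or dotted point of one of the two insertions, and that this new point neither coincides with an existing element nor overshoots the gap. Everything else is straightforward once Theorem~\ref{BUNCH_X}, Lemma~\ref{kjh986} and Proposition~\ref{BaldiTeruiREF} are in hand.
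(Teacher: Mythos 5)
Your first paragraph (the negative statement) is correct and coincides with the paper's argument, and your two \lq\lq routine cases\rq\rq\ are also correct: deducing $a<a^\star<b$ (resp.\ $a<b_\star<b$) from the cover statement of Lemma~\ref{kjh986} plus the fact that the new point lies outside $X$ is a clean packaging of the paper's cases in which the relevant endpoint is a group element of an admissible layer (the paper's cases 1(a), 1(b), 2(a), 2(b)).

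The reduction of all remaining gaps to these two operations is where your proof fails, and it fails exactly at the point that carries the weight of the paper's proof. Your claim that \lq\lq the successor of a dotted element $\accentset{\bullet}c$ is its undotted original $c$\rq\rq\ is false: elements of \emph{lower} layers that the transitions map onto $c$ lie strictly between $\accentset{\bullet}c$ and $c$. Concretely, take the bunch with $\kappa=\{t<_\kappa u\}$, $t\in\kappa_o$, $u\in\kappa_I$, $\bm{G}_t=\bm{G}_u=\bm{H}_u=\mathbb{Z}$ and $\varsigma_{t\to u}=\mathrm{id}$; writing $m_t\in G_t$ and $m_u\in G_u$ for the two copies of an integer $m$, the order rules (\ref{INNENfrom})--(\ref{kisebbD}) give $\accentset{\bullet}m<m_t<m_u$, and both pairs are gaps. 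So the successor of the dotted element $\accentset{\bullet}m$ is $m_t$, a group element of the \emph{least} layer, not $m_u$; for the gap $\accentset{\bullet}m<m_t$ neither of your operations is available, and your claimed reduction (\lq\lq $b=c$ lies in a non-least layer, apply the second operation\rq\rq) collapses. This is precisely the configuration the paper treats via (\ref{kisebbD}) (its case 2(c)): the filling point is the \emph{dotted} element $(a_\star)^\bullet$ of the freshly inserted layer $u_\star$, and its position must be verified by a direct computation with (\ref{INNENfrom})--(\ref{kisebbD}), because the cover statement of Lemma~\ref{kjh986} says nothing about where the dotted elements of the new layer land -- so \lq\lq a single application of Lemma~\ref{kjh986}\rq\rq\ cannot settle these cases. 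The same objection applies to the paper's case 1(c) (dotted upper endpoint) and to gaps whose lower endpoint lies in a $\kappa_J$-layer and whose upper endpoint lies in the least layer (these too are real: in the bunch with $\bm{G}_t=\bm{G}_u=\mathbb{Z}$, $u\in\kappa_J$, $\varsigma_{t\to u}=\mathrm{id}$, the chain is $\mathbb{Z}\lex\mathbf{2}$ and $0_u<1_t$ is such a gap): your final sentence merely asserts that some dotted element of the new layer \lq\lq lands strictly between $a$ and $b$\rq\rq, but exhibiting that element and proving the two inequalities is the substantive content of the paper's proof, and it is absent from yours.
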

\begin{proof}
An algebra $\mathbf X$ from $\mathfrak I^{\mathfrak{c}}_{\mathfrak 1}$ 
cannot be embedded into a dense algebra in $\mathfrak I_{\mathfrak 0\mathfrak 1}$
since the images of the constants of $\mathbf X$ must be different due to injectivity, 
and hence such an embedding could only be done into an element $\mathbf Y$ in $\mathfrak I_{\mathfrak 1}$, but then, by definition, there would be a gap between the two constants of $\mathbf Y$.

Let $\mathbf X=(X,\leq,\cdot,\ite{},t,f)$ be an odd involutive FL$_e$-chain, 
$\mathcal X$ its corresponding bunch, and let $x< y$ be a gap in $X$. 
Then $\mathbf X$ is nontrivial,
by (\ref{EZazX}), $x\in L_u$ and $y\in L_v$ for some $u,v\in\kappa$, and the condition described in 
(\ref{INNENfrom})--(\ref{kisebbD}) holds.
%We shall use Lemma~\ref{kjh986} below without further mention.

%\newpage
\begin{enumerate}
%\item
%If $v\neq\kappa_J$ then $({\zeta_{v\to v}(y)}^\star)^\bullet$ in $\mathbf X_{ v^\star}$ is in between $x$ and $y$ in the ordering of $\mathbf X_{ v^\star}$. Indeed, either $y\in G_v$ in which case $y^\star$ is the upper cover of $y$ in $\mathbf X_{ v^\star}$, and since $(y^\star)^\bullet$ is smaller than $y^\star$, it implies that
\item 
Assume that the condition in (\ref{INNENfrom}) holds, that is, 
\begin{equation}\label{OurCondi1}
\zeta_{u\to uv}(x)  \prec_{uv} \zeta_{v\to uv}(y).
\end{equation}
Then $u\geq_{\kappa} v$ holds.
Indeed, from %the opposite, 
$u<_{\kappa} v$,
$
x
<
\zeta_{u\to uv}(x)
$
would follow by (\ref{kisebbAB})
since
$
G_{uv}
\overset{(\ref{zetaRANGe})}{\ni} 
\zeta_{u\to uv}(x)
=
\zeta_{uv\to uv}(\zeta_{u\to uv}(x))
$,
and hence
$
x
<
\zeta_{u\to uv}(x)
\overset{(\ref{OurCondi1}),\ref{(G1)}}{<}
\zeta_{v\to uv}(y)
\overset{(\ref{zetaMI}),\ref{(G1)}}{=}
\gamma_v(y)
$
would hold.
Now, by (\ref{DEFgamma}),
either $\gamma_v(y)=y$
which ensures
$
x
<
\zeta_{u\to v}(x)
<
y
$,
a contradiction to $x< y$ being a gap in $X$,
or
$\gamma_v(y)$ is the cover of $y\in\accentset{\bullet}H_v$ in the order $\leq_v$ of $L_v$,
and thus
$
G_v
\overset{}{\ni}
\zeta_{u\to v}(x)
\overset{(\ref{KibovitettRendezesITTIS})}{<_v}
\gamma_v(y)
$
implies 
$
\zeta_{u\to v}(x)
<_v
y
$,
and  in turn,
$
x
<
\zeta_{u\to v}(x)
\overset{(\ref{KibovitettRendezesITTIS})}{<}
y
$,
also contradicting to $x< y$ being a gap in $X$.
%Therefore, $u\geq_{\kappa} v$ holds, as stated.

\begin{enumerate}
\item \label{EgyA} 
If $u=v=t$ then since $X$ is odd, $v\in\kappa_o$ and in turn, $x\in G_t$ follows,
along with 
$v\notin\kappa_J$
since $\kappa_o$ and $\kappa_J$ are disjoint.
By Lemma~\ref{kjh986}, $x^\star$ in $\mathbf X_{ v^\star}$ is in between $x$ and $y$ in the ordering of $\mathbf X_{ v^\star}$.

%EZ IS MŰKÖDIK: and we can consider $\mathcal X_{ v^\star}$ and $({y^\star})^\bullet$.

\item\label{standardDUMA} 
If $u\geq_\kappa v>_\kappa t$ and $y\in G_v$ then by  Lemma~\ref{kjh986},
$y_\star$ in $\mathbf X_{ v_\star}$ is in between $x$ and $y$ in the ordering of $\mathbf X_{ v_\star}$. 

\item\label{EgyC} 
If $u\geq_\kappa v>_\kappa t$ and $y\in \accentset{\bullet}H_v$ then 
$({\zeta_{v\to v}(y)}^\star)^\bullet$ in $\mathbf X_{ v^\star}$ is in between $x$ and $y$ in the ordering of $\mathbf X_{ v^\star}$. 
Indeed, 
$x<({\zeta_{v\to v}(y)}^\star)^\bullet$ is shown by (\ref{INNENfrom}) since
$$
\begin{array}{ccl}
\zeta_{u\to uv^\star}(x)  
& \overset{\ref{V1}}{=} &
\zeta_{u\to u}(x) \\
&=&
\zeta_{u\to uv}(x)  \\
&\overset{(\ref{OurCondi1})}{\prec_{uv}}&
\zeta_{v\to uv}(y)  \\
&=&
\zeta_{v\to u}(y)   \\
&\overset{(\ref{DEFzeta})}{=}&
\varsigma_{v\to u}(
\zeta_{v\to v}(y)
)   \\
&=&
\varsigma_{v\to u}(\varphi_{v\to v_\star}^{-1}
(
\varphi_{v\to v_\star}(\zeta_{v\to v}(y))
))   \\
&\overset{\ref{V3}}{=}&
\varsigma_{v^\star\to u}(\varphi_{v\to v_\star}(\zeta_{v\to v}(y)))   \\
&\overset{(\ref{izo1}),(\ref{DEFzeta})}{=}&
\zeta_{v^\star\to u}((\varphi_{v\to v_\star}(\zeta_{v\to v}(y)))^\bullet)   \\
&\overset{\ref{V5}}{=}&
\zeta_{v^\star\to u}(({\zeta_{v\to v}(y)}^\star)^\bullet)  \\
&\overset{\ref{V1}}{=}&
\zeta_{v^\star\to uv^\star}(({\zeta_{v\to v}(y)}^\star)^\bullet)
,
\end{array}
$$
and
$({\zeta_{v\to v}(y)}^\star)^\bullet<y$ is shown by (\ref{kisebbD})
since
$$
\begin{array}{ccl}
\zeta_{v^\star\to vv^\star}(({\zeta_{v\to v}(y)}^\star)^\bullet)
&\overset{(\ref{DEFzeta})}{=}&
\varsigma_{v^\star\to vv^\star}({\zeta_{v\to v}(y)}^\star)  \\
&\overset{\ref{IDes}}{=} &
{\zeta_{v\to v}(y)}^\star  \\
&\overset{\ref{V5}}{=}&
\varphi_{v\to v_\star}(\zeta_{v\to v}(y))  \\
&\overset{\ref{V3}}{=}&
\varsigma_{v\to v^\star}(\zeta_{v\to v}(y))\\
&\overset{(\ref{KompoZicio})}{=}&
\zeta_{v\to v^\star}(y)\\
&=&
\zeta_{v\to vv^\star}(y)
.
\end{array}
$$

\vskip1cm

\end{enumerate}

\item Assume in the following three items 
(\ref{kisebbAB}), (\ref{kisebbC}), and (\ref{kisebbD}), respectively:
let
\begin{equation}\label{OurCondi2}
\zeta_{u\to uv}(x)=\zeta_{v\to uv}(y).
\end{equation}

\begin{enumerate}
\item \label{KettoA} 
If $u<_\kappa v$ and $y\in G_v$ then $v\neq t$, thus we can consider $\mathcal X_{v_\star}$ 
and conclude the proof as in item (\ref{standardDUMA}) above. 
%Since $y_\star$ is the lower cover of $y$, $x<y_\star<y$ holds in the ordering of the odd involutive FL$_e$-chain which corresponds to $\mathcal X_{v_\star}$.

\item\label{KettoB} 
If $u=v\in\kappa_I$, $x\in \accentset{\bullet}H_u$ and $y\in G_v$ then since $\mathbf X$ is odd, it follows that $v\neq t$. Hence we can consider $\mathcal X_{v_\star}$, and $x<y_\star<y$ follows as above.
%Since $y_\star$ is the lower cover of $y$, $x<y_\star<y$ holds in the ordering of the odd involutive FL$_e$-chain which corresponds to $\mathcal X_{v_\star}$.

\item\label{KettoC} 
If $\kappa_I\ni u>_\kappa v$ and $(\accentset{\bullet}a=)x\in \accentset{\bullet}H_u$ 
then $u>t$ hence we can consider $\mathbf X_{u_\star}$ in which
$(a_\star)^\bullet$ is in between $x$ and $y$.
Indeed, it follows from (\ref{kisebbD}) that
$x<(a_\star)^\bullet$
since 
$\kappa_I\ni u>_\kappa u_\star$,
$x\in\accentset{\bullet}H_u$,
%$(a_\star)^\bullet\in\accentset{\bullet}H_{u_\star}$,
and
$
\zeta_{u\to uu_\star}(x)  
=
\zeta_{u\to u}(x)  
\overset{(\ref{DEFzeta})}{=}
\varsigma_{u\to u}(a)  
\overset{\ref{IDes}}{=}
a
\overset{\ref{vS2}}{=} 
\varphi_{u\to u_\star}^{-1}(a_\star)
\overset{\ref{IDes}}{=} 
(\varsigma_{u\to u}\circ\varphi_{u\to u_\star}^{-1})(a_\star)
\overset{\ref{vS3}}{=}
\varsigma_{u_\star\to u}(a_\star)
\overset{(\ref{DEFzeta})}{=} 
\zeta_{u_\star\to u}((a_\star)^\bullet)
=
\zeta_{u_\star\to uu_\star}((a_\star)^\bullet)
$.
\end{enumerate}
\end{enumerate}
Summing up, $\mathfrak I^{\mathfrak{sl}}_{\mathfrak 0}$ is densifiable.
To see that $\mathfrak I^{\mathfrak{sl}}_{\mathfrak 0,\mathfrak{symm}}$ is densifiable, too, 
notice that since in Lemma~\ref{kjh986} the new element ($v^\star$ or $v_\star$) goes to the $\kappa_I$ partition, the extensions preserve idempotence symmetry.
Therefore, the extensions in the proof above are in $\mathfrak I^{\mathfrak{c}}_{\mathfrak 0,\mathfrak{symm}}$ if so is $\mathbf X$.
\end{proof}
Since %the language of residuated lattices includes only a finite set of symbols, its cardinality is  countable. Therefore, referring to 
$\aleph_0+\aleph_0=\aleph_0$,
an immediate corollary of Theorem~\ref{EzaTuTtIDENSIFIABLE} and Proposition~\ref{BaldiTeruiREF} is

\begin{corollary}\label{EzaTuTtIDENSIFIABLEchains}
Every is countable
$\mathbf X\in\mathfrak I^{\mathfrak{c}}_{\mathfrak 0}$ 
(resp. 
$\mathbf X\in\mathfrak I^{\mathfrak{c}}_{\mathfrak 0,\mathfrak{symm}}$)
can be embedded into a countable dense chain $\mathbf Y$ in $\mathfrak I^{\mathfrak{c}}_{\mathfrak 0}$
(resp. 
$\mathfrak I^{\mathfrak{c}}_{\mathfrak 0,\mathfrak{symm}}$).
\end{corollary}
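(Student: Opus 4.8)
The plan is to obtain the statement as a direct combination of Theorem~\ref{EzaTuTtIDENSIFIABLE} with the cardinality bound of Proposition~\ref{BaldiTeruiREF}, the only genuine content being the cardinal arithmetic that keeps the target chain countable. First I would record the identification that makes the two ingredients fit together: a chain lying in $\mathfrak I^{\mathfrak{c}}_{\mathfrak 0}$ (resp.\ in $\mathfrak I^{\mathfrak{c}}_{\mathfrak 0,\mathfrak{symm}}$) is precisely a chain of the semilinear variety $\mathfrak I^{\mathfrak{sl}}_{\mathfrak 0}$ (resp.\ $\mathfrak I^{\mathfrak{sl}}_{\mathfrak 0,\mathfrak{symm}}$), and conversely every chain of that variety belongs to the corresponding chain class. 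Consequently Theorem~\ref{EzaTuTtIDENSIFIABLE}, which asserts that these two varieties are densifiable, applies verbatim to the given $\mathbf X$, and any dense chain of the variety produced by the densification argument lands back in $\mathfrak I^{\mathfrak{c}}_{\mathfrak 0}$ (resp.\ $\mathfrak I^{\mathfrak{c}}_{\mathfrak 0,\mathfrak{symm}}$).

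Next I would fix a countable $\mathbf X$, say of cardinality $\delta$ with $1<\delta\leq\aleph_0$, and invoke Proposition~\ref{BaldiTeruiREF} for the densifiable variety in question. This yields an embedding of $\mathbf X$ into a dense chain $\mathbf Y$ of the same variety of cardinality $\delta+\aleph_0+|L|$. Since the language $L$ of FL$_e$-algebras is finite, $|L|<\aleph_0$, and since $\delta\leq\aleph_0$, the identity $\aleph_0+\aleph_0=\aleph_0$ gives $\delta+\aleph_0+|L|=\aleph_0$; hence $\mathbf Y$ is countable. Being a dense chain of $\mathfrak I^{\mathfrak{sl}}_{\mathfrak 0}$ (resp.\ $\mathfrak I^{\mathfrak{sl}}_{\mathfrak 0,\mathfrak{symm}}$), $\mathbf Y$ lies in $\mathfrak I^{\mathfrak{c}}_{\mathfrak 0}$ (resp.\ $\mathfrak I^{\mathfrak{c}}_{\mathfrak 0,\mathfrak{symm}}$), which is exactly the assertion. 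The two parallel \lq\lq resp.\rq\rq\ cases need no separate treatment: the identical argument is simply run inside the respective variety, the idempotent-symmetric case being legitimate because Theorem~\ref{EzaTuTtIDENSIFIABLE} has already established densifiability of $\mathfrak I^{\mathfrak{sl}}_{\mathfrak 0,\mathfrak{symm}}$ as well.

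I do not expect a substantial obstacle here; the statement is essentially a transcription of densifiability plus the Baldi--Terui cardinal bookkeeping, as the phrase \lq\lq immediate corollary\rq\rq\ indicates. The one point worth a line of care is the degenerate case $\delta=1$, which falls outside the hypothesis $\delta>1$ of Proposition~\ref{BaldiTeruiREF} and which is excluded by the density convention (a one-element algebra is not dense). For a trivial $\mathbf X$ I would simply note that the variety already contains some countable dense chain---for instance the chain obtained by densifying any nontrivial countable member via Theorem~\ref{EzaTuTtIDENSIFIABLE}---into which the one-element chain embeds. With this boundary remark dispatched, the proof is complete.
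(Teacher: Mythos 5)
Your proposal is correct and takes essentially the same route as the paper, whose entire proof is the one-line observation that the corollary follows from the densifiability of $\mathfrak I^{\mathfrak{sl}}_{\mathfrak 0}$ and $\mathfrak I^{\mathfrak{sl}}_{\mathfrak 0,\mathfrak{symm}}$ (Theorem~\ref{EzaTuTtIDENSIFIABLE}) together with Proposition~\ref{BaldiTeruiREF} and the cardinal identity $\aleph_0+\aleph_0=\aleph_0$. Your additional handling of the degenerate one-element case (excluded by the hypothesis $\delta>1$ in Proposition~\ref{BaldiTeruiREF}) is a legitimate point of care the paper leaves implicit, and your fix is sound since in the odd case $t=f$, so the trivial chain embeds into any countable dense member of the variety.
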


\begin{remark}\label{TOPos} 
It is readily seen from (\ref{INNENfrom})--(\ref{kisebbD})
that a
nontrivial
odd or even involutive FL$_e$-algebra $\mathbf X$ is bounded if and only if its skeleton has a greatest element, it belongs to the $\kappa_I$ partition, and the corresponding layer group (hence also its subgroup) is the trivial (one-element) abelian $o$-group.
Indeed, assume $\mathbf X$ has a top element.
For $u<v$ and any element $x$ in $L_u$, 
$\zeta_{u\to uv}(x)$ is greater than $x$ by (\ref{kisebbAB}).
Therefore, the skeleton of $\mathbf X$ must have a greatest element, say $u$, and the top element must belong to $L_u$.
Abelian $o$-groups are known to be either trivial or unbounded. If $\bm{G}_u$ is unbounded then for any element $x$ in $L_u$ there exists a larger element in $\mathbf X$; namely, if $x\in G_u$ then any element in $G_u$ which is larger than $x$ in $G_u$ will also be larger in $\mathbf X$ by (\ref{INNENfrom}), and  
if $x\in \accentset{\bullet}H_u$ then $\zeta_{u\to u}(x)$ is larger than $x$ in $\mathbf X$ by (\ref{kisebbC}).
Therefore, $\bm{G}_u$ must be trivial, thus $G_u=\{u\}$, c.f.\ (\ref{IgyNeznekKi}). 
Since $\bm{G}_u$ is trivial, $u$ cannot be  in $\kappa_J$ by \ref{(G2)}. 
Since $u=t$ would imply,
using the maximality of $u$ in the skeleton, that $\mathbf X$ is trivial, $u\notin\kappa_o$ follows from \ref{(G1)}.
Therefore, $u$ must be in $\kappa_I$.
The other direction is straightforward as $u$ and $\accentset{\bullet}u$ are the top and the bottom elements of $\mathbf X$, respectively, shown by (\ref{kisebbAB}) and (\ref{kisebbD}), respectively.
\end{remark}

\begin{corollary}\label{EzaTuTtIDENSIFIABLEchainsBOUNDED}
Every is countable bounded
$\mathbf X\in\mathfrak I^{\mathfrak{c}}_{\mathfrak 0}$ 
(resp. 
$\mathbf X\in\mathfrak I^{\mathfrak{c}}_{\mathfrak 0,\mathfrak{symm}}$)
can be embedded into a countable bounded dense chain $\mathbf Y$ in $\mathfrak I^{\mathfrak{c}}_{\mathfrak 0}$
(resp. 
$\mathfrak I^{\mathfrak{c}}_{\mathfrak 0,\mathfrak{symm}}$) such that the embedding preserves the top and the bottom elements, too.
\end{corollary}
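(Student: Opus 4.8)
The plan is to refine the proof of Corollary~\ref{EzaTuTtIDENSIFIABLEchains} by inspecting \emph{where} the densification of Theorem~\ref{EzaTuTtIDENSIFIABLE} places the new elements, and then to read off boundedness from Remark~\ref{TOPos}. We may assume $\mathbf X$ is nontrivial. First I would translate boundedness into the language of the group representation $\mathcal X$ of $\mathbf X$: by Remark~\ref{TOPos}, the skeleton $\kappa$ has a greatest element $m$, $m\in\kappa_I$, the layer group $\bm{G}_m$ and its subgroup $\bm{H}_m$ are trivial, so $L_m=\{m,\accentset{\bullet}m\}$, the top element of $\mathbf X$ is $m$, and the bottom element is $\accentset{\bullet}m$.

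The key point is that every gap-filling step in the proof of Theorem~\ref{EzaTuTtIDENSIFIABLE} inserts, via Lemma~\ref{kjh986}, a new skeleton element lying \emph{strictly below} $m$, and never alters $\bm{G}_m$. I would verify this case by case. In the steps that insert a $v_\star$ just below $v$ (items \ref{standardDUMA}, \ref{KettoA}, \ref{KettoB}, \ref{KettoC}) the new index satisfies $v_\star<_\kappa v\leq_\kappa m$, so it stays below $m$. In the steps that insert a $v^\star$ just above $v$ (items \ref{EgyA} and \ref{EgyC}) one has $v<_\kappa m$: in item \ref{EgyA} because $v=t$ and $m>_\kappa t$ by nontriviality, while in item \ref{EgyC} the endpoint $y$ lies in $\accentset{\bullet}H_v$, so $v=m$ would force $y\in\accentset{\bullet}H_m=\{\accentset{\bullet}m\}$, i.e.\ $y=\accentset{\bullet}m$ would already be the bottom, contradicting that $x<y$ is a gap; hence $v<_\kappa m$ and by \ref{V1} the inserted $v^\star$ stays below $m$. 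In every case the new layer is a copy of an interior layer group (or, when $v=m$ in the $v_\star$-case, of the trivial group $\bm{G}_m$), whence, by (\ref{kisebbAB}) and (\ref{kisebbD}), all of its elements land strictly between $\accentset{\bullet}m$ and $m$; in particular $m$ and $\accentset{\bullet}m$ keep being the extremal elements and $\bm{G}_m$ is untouched.

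Consequently, the maximal skeleton element of every bunch arising in the iterated densification is still $m\in\kappa_I$ with trivial layer group, so Remark~\ref{TOPos} applies to each intermediate bunch and, since all new indices remain below $m$, to the direct limit as well: the resulting countable dense chain $\mathbf Y$ is bounded with top $m$ and bottom $\accentset{\bullet}m$. Because each step is an extension of bunches in the sense of the paragraph preceding Lemma~\ref{kjh986}, the embedding is the identity in each coordinate and hence sends $m\mapsto m$ and $\accentset{\bullet}m\mapsto\accentset{\bullet}m$, i.e.\ it preserves the top and the bottom. Running this inside the iteration behind Proposition~\ref{BaldiTeruiREF} keeps $\mathbf Y$ countable, and, as noted at the end of the proof of Theorem~\ref{EzaTuTtIDENSIFIABLE}, the steps stay within $\mathfrak I^{\mathfrak{c}}_{\mathfrak 0,\mathfrak{symm}}$ whenever $\mathbf X$ is, which yields the symmetric variant at the same time.

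The main obstacle is exactly the middle paragraph: one must rule out that some gap-filling step could sit above $m$ or below $\accentset{\bullet}m$. This reduces to checking, in the two $v^\star$-cases, that the relevant skeleton index is strictly below $m$ — which is precisely where the structure of a bounded chain from Remark~\ref{TOPos} (a gap cannot have $\accentset{\bullet}m$ as its upper endpoint) is used. Everything else is a routine appeal to Corollary~\ref{EzaTuTtIDENSIFIABLEchains}, Remark~\ref{TOPos}, and the extension mechanism of Lemma~\ref{kjh986}.
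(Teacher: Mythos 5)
Your proof is correct and takes essentially the same route as the paper's: use Remark~\ref{TOPos} to reduce preservation of boundedness to checking that every gap-filling step of Theorem~\ref{EzaTuTtIDENSIFIABLE} inserts its new skeleton element strictly below the top index $m$ (whose layer group is trivial), note that the $v_\star$-insertions do so automatically, and handle the two $v^\star$-cases separately. The only cosmetic difference is in those $v^\star$-cases, where you argue via nontriviality (for case (\ref{EgyA})) and via \lq\lq $y$ would be the bottom element, so no gap $x<y$ exists\rq\rq\ (for case (\ref{EgyC})), while the paper argues that (\ref{OurCondi1}) forces $\bm{G}_v$ to be nontrivial when $u=v$; both verifications are immediate and equally valid.
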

\begin{proof}
By Remark~\ref{TOPos}, there exists the top element of the skeleton of $\mathbf X$, it belongs to $\kappa_I$, and the corresponding layer group is trivial.
Since the embeddings in all the cases (\ref{EgyA}),  (\ref{standardDUMA}), (\ref{EgyC}),  (\ref{KettoA}), (\ref{KettoB}), and (\ref{KettoC})  
in the proof of Theorem~\ref{EzaTuTtIDENSIFIABLE} are done by coordinatewise identity mapings (c.f.\ the paragraph after Proposition~\ref{BaldiTeruiREF}), 
we only need to check that the constructions in these cases did not introduced new top and bottom elements in the FL$_e$-chain. 
By Remark~\ref{TOPos}, it suffices to check that the top element of its skeleton has not been changed.
Apart from (\ref{EgyA}) and (\ref{EgyC}), $\mathcal X_{v_\star}$ has been used, so in those cases the new element of the skeleton wend under another element. 
%In (\ref{EgyA}) $v\notin\kappa_I$ holds since $v\in\kappa_o$, hence $v$ cannot be the top element of the skeleton.
In (\ref{EgyA}) and (\ref{EgyC}), 
if $u>_\kappa v$ then the new element $v^\star$ goes in between them, whereas if $u=v$ then 
both $\zeta_{v\to v}(x)$ and $\zeta_{v\to v}(y)$ are in the $G_v$, they are different by (\ref{OurCondi1}), hence $\bm{G}_v$ is not trivial,
so $v$ cannot be the top element of the skeleton, and hence the new element goes between $v$ and the top element.
\end{proof}

Building upon Corollary~\ref{EzaTuTtIDENSIFIABLEchainsBOUNDED}, we are now prepared to prove

\begin{theorem}
The variety $\mathfrak I^{\mathfrak{sl}}_{\mathfrak 0}$ is strongly standard complete.
That is, Involutive Uninorm Logic with Fixed Point is strongly standard complete.
\end{theorem}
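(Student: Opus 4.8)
The plan is to reduce the claimed strong standard completeness of $\mathbf{IUL}^{fp}$ to the densification-plus-completion scheme outlined in the introduction, using the tools already assembled in the excerpt. Recall that $\mathbf{IUL}^{fp}$-chains are precisely the nontrivial bounded odd involutive FL$_e$-chains, and that $\mathfrak I^{\mathfrak{sl}}_{\mathfrak 0}$ is semilinear, so by the subdirect representation its consequence relation coincides with that of its chains. Thus it suffices to show that \emph{every countable nontrivial bounded} chain $\mathbf X\in\mathfrak I^{\mathfrak c}_{\mathfrak 0}$ embeds into a \emph{standard} such chain, i.e.\ one whose universe is the real unit interval $[0,1]$ with top mapped to $1$ and bottom to $0$. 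This is exactly the statement announced at the end of the preliminaries.

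First I would invoke Corollary~\ref{EzaTuTtIDENSIFIABLEchainsBOUNDED}: any countable bounded $\mathbf X\in\mathfrak I^{\mathfrak c}_{\mathfrak 0}$ embeds into a \emph{countable bounded dense} chain $\mathbf Y\in\mathfrak I^{\mathfrak c}_{\mathfrak 0}$ by an embedding that preserves the top and bottom elements. This disposes of the gaps. Second, I would apply the Dedekind--MacNeille completion to $\mathbf Y$. The key facts I would rely on here are (a) that the MacNeille completion of a bounded densely ordered \emph{countable} chain is order-isomorphic to the real interval $[0,1]$ (it is a complete, densely ordered, separable chain with endpoints, hence isomorphic to $[0,1]$ by Cantor's characterization), and (b) that the MacNeille completion of an involutive FL$_e$-chain can be endowed with FL$_e$-operations extending those of $\mathbf Y$, yielding again an involutive FL$_e$-chain into which $\mathbf Y$ embeds as a subalgebra. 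For the latter I would cite the standard MacNeille-completion results for involutive residuated structures (the completion preserves involutivity and oddness, since $\komp$ extends to an order-reversing involution on the completion and the unit remains its own complement). Composing the two embeddings $\mathbf X\hookrightarrow\mathbf Y\hookrightarrow\overline{\mathbf Y}$ gives an embedding of $\mathbf X$ into a standard odd involutive FL$_e$-chain on $[0,1]$, with top $\mapsto 1$ and bottom $\mapsto 0$ because both constituent embeddings preserve the extremal elements.

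The main obstacle I anticipate is the interaction between densification and completion, specifically ensuring that \emph{no new gaps are created by the completion and that density of $\mathbf Y$ survives in the completion}. A MacNeille completion can in principle reintroduce gaps if the original chain, though dense, has ``invisible'' cuts; one must verify that the completion of a dense countable chain with endpoints is genuinely gap-free and order-isomorphic to $[0,1]$. Equally delicate is checking that the completed multiplication remains residuated and that involutivity is preserved under completion --- i.e.\ that $\overline{\mathbf Y}$ really lands back in $\mathfrak I^{\mathfrak c}_{\mathfrak 0}$ rather than merely being a complete lattice-ordered structure. I would handle this by appealing to the established MacNeille-completion machinery for involutive FL$_e$-chains (as used in the folklore argument of \cite{JMstcompl} and in \cite{JS_Hahn}), which guarantees that density is inherited by the completion of a dense chain and that the completion of an odd involutive FL$_e$-chain is again one.

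Finally, I would assemble the pieces: semilinearity reduces the consequence relation to chains; every countable bounded odd involutive FL$_e$-chain embeds, via densification (Corollary~\ref{EzaTuTtIDENSIFIABLEchainsBOUNDED}) followed by MacNeille completion, into a standard one on $[0,1]$ preserving the bounds; and this embeddability of arbitrary countable chains into standard chains is exactly what yields strong (not merely finite strong) standard completeness, since any theory $T$ and formula $\varphi$ with $T\nvDash\varphi$ can be refuted already on a countable chain, which then embeds into a standard chain preserving the refutation. This strengthens the finite strong standard completeness of \cite{JS_FSSC} to full strong standard completeness, as claimed.
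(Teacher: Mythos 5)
Your proposal is correct and follows essentially the same route as the paper: densify via Corollary~\ref{EzaTuTtIDENSIFIABLEchainsBOUNDED}, identify the resulting countable bounded dense chain with $\mathbb Q\cap[0,1]$ by Cantor's theorem, and then complete to $[0,1]$ --- the paper carries out your ``MacNeille completion'' step concretely by the left-continuous supremum extension of \cite[Theorem 3.2]{JMstcompl}, and verifies the point you delegate to cited machinery (preservation of involutivity) by noting that involutivity is equivalent to the residual complement being strictly decreasing, which is inherited precisely because $\mathbb Q\cap[0,1]$ is dense in $[0,1]$.
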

\begin{proof}
As for the MacNeille completion step, adapt the notation of Corollary~\ref{EzaTuTtIDENSIFIABLEchainsBOUNDED}.
Bounded countable dense chains have been shown to be isomorphic to $\mathbb Q\cap[0,1]$ by Cantor, hence $\mathbf X$ can be embedded (surjectively) into an odd involutive FL$_e$-chain over $\mathbb Q\cap[0,1]$.
Denoting its monoidal operation by $\te$,
$$
\gpont{a}{b}=\sup_{x\in\mathbb Q\cap[0,1], x<a}\sup_{y\in\mathbb Q\cap[0,1], y<b}\g{x}{y}
\ \ \ \ \mbox{(for $a,b\in[0,1]$)}
$$
results in a commutative, associative, residuated (equivalently left-continuous) extension $\tepont$ of $\te$ over $[0,1]$ (exactly like in \cite[Theorem 3.2]{JMstcompl}).
Obviously, being odd is inherited by $\tepont$.
Being involutive is inherited, too, since the residual complement operation being involutive is known to be equivalent to being strictly decreasing, %(see e.g.\, \cite[Remark 4.1.6]{}), 
and the strictly decreasing nature of the residual complement operation is clearly inherited using that $\mathbb Q\cap[0,1]$ is dense in $\mathbb R\cap[0,1]$.
\end{proof}

\begin{remark}
The property of idempotence symmetry is not inherited by a MacNeille completion, in general.
Indeed, the $o$-group $\mathbb Z\lex\mathbb R$ is in $\mathfrak I^{\mathfrak{c}}_{\mathfrak 0,\mathfrak{symm}}$.
However, it is not difficult to see that its MacNeille completion is $\PLPII{\mathbb Z}{\mathbb R}$ 
(see \cite[Definition 4.2]{JS_Hahn} for its definition or \cite[Example 3.1]{JS_Hahn}
for its analytic description),
and it is not in $\mathfrak I^{\mathfrak{c}}_{\mathfrak 0,\mathfrak{symm}}$.
\end{remark}

\subsection*{Acknowledgement}
This work has been supported by the NKFI-K-146701 grant.

\end{document}